  \newcommand{\Z}{\ensuremath{\mathbb{Z}}}%
  \newcommand{\N}{\ensuremath{\mathbb{N}}}%
    \newcommand{\A}{\ensuremath{\mathcal{A}}}%
\theoremstyle{definition}
  \newtheorem{defin}{Definition}[section]
  \newtheorem{definappendix}{Definition}[section]
\theoremstyle{plain}
  \newtheorem{thm}[defin]{Theorem}
  \newtheorem{main thm}{Theorem}
  \newtheorem{prop}[defin]{Proposition}
  \newtheorem{cor}[defin]{Corollary}
  \newtheorem{lemma}[defin]{Lemma}
  \newtheorem{lemmaappendix}[definappendix]{Lemma}
\theoremstyle{remark}
  \newtheorem{remark}[defin]{Remark}
  \newtheorem{remarkappendix}[definappendix]{Remark}
  \renewcommand\appendix{\par
\gdef\thetable{\Alph{table}}
\gdef\thefigure{\Alph{figure}}
\section*{Appendix: Schreier graphs and Gray code}
\gdef\thesection{\Alph{section}}
\setcounter{section}{1}}
  \author{Nicol\'as Matte Bon\thanks{Universit\'e d'Orsay \& DMA, \'Ecole Normale Sup\'erieure; email: nicolas.matte.bon@ens.fr}}
  \title{Topological full groups of minimal subshifts with subgroups of intermediate growth}
\begin{document}
  \maketitle
 \abstract{We show that every Grigorchuk group $G_\omega$ embeds in (the commutator subgroup of) the topological full group of a minimal subshift. In particular,  the topological full group of a Cantor minimal system can have subgroups of intermediate growth, a question raised by Grigorchuk;  it can also have finitely generated  infinite torsion subgroups, as well as residually finite subgroups that are  not elementary amenable, answering   questions  of Cornulier. By estimating the word-complexity of this subshift, we deduce that every Grigorchuk group $G_\omega$ can be embedded in a finitely generated simple group that has trivial Poisson boundary for every simple random walk.}
 
 \section{Introduction}
Let $(X, \varphi)$ be a  \emph{Cantor minimal system}, i.e.\ a minimal dynamical system where $X$ is a compact space homeomorphic to the Cantor set, and $\varphi$ is a homeomorphism of $X$. Recall that a dynamical system is said to be minimal if every orbit is dense. The \emph{topological full group} of $(X, \varphi)$ is the group $[[\varphi]]$ of all homeomorphisms of $X$ that locally coincide with a power of $\varphi$; in other words the homeomorphisms $g$ of $X$ so that there exists a continuous function $n:X\to \Z$ that verifies $g(x)=\varphi^{n(x)}(x)$ for every $x\in X$. Giordano, Putnam and Skau show in \cite{Giordano-Putnam-Skau:flipconjugacy} that this group characterizes completely the dynamics of $(X, \varphi)$, up to replacing $\varphi$ by $\varphi^{-1}$.
 
In this note we are mainly interested in the possible behaviors for the growth of finitely generated subgroups of $[[\varphi]]$. The \emph{growth function} of a finitely generated group $G$ equipped with a finite symmetric generating set $S$ is the function $b_{G, S}:\N\to\N$ that counts the number of group elements that can be obtained as a product of at most $n$ generators in $S$. The growth of  $G$ is said  to be \emph{polynomial}  if there exist $C, D>0$ such that $b_{G, S}(n)\leq Cn^D$ for every $n\in\N$, \emph{exponential} if there exists $c>1$ such that $b_{G, S}(n)\geq c^n$ for every $n\in\N$, and \emph{intermediate} otherwise. These properties do not depend on the choice of $S$. 
 
 \emph{Grigorchuk groups} $G_\omega$, first defined and studied by Grigorchuk in \cite{Grigorchuk:grigorchukgroups}, provide examples of groups of intermediate growth. The groups $G_\omega$ can be defined as certain  subgroups of the automorphism group of a binary rooted tree. They are parameterized by an infinite sequence on three symbols, denoted $\omega\in\{\mathbf{0, 1, 2}\}^{\N_+}$. We refer to Section \ref{S: Grigorchuk} for preliminaries on the groups $G_\omega$. 

 \begin{thm}\label{main}
  For every $\omega\in\{\mathbf{ 0, 1, 2}\}^{\N_+}$ there exists a minimal subshift $(X_\omega, \varphi_\omega)$ and an embedding of the Grigorchuk group $G_\omega$ into the topological full group $[[\varphi_\omega]]$. 
  \end{thm}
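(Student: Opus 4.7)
The plan is to realize each generic orbit of $G_\omega$ on the boundary $\partial T = \{0,1\}^{\N}$ of the binary rooted tree as an orbit of a minimal $\Z$-action, then to encode enough local data into a symbolic extension so that every element of $G_\omega$ becomes a homeomorphism with a continuous cocycle.

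First, I would recall the orbital structure of $G_\omega$ on $\partial T$. For a point $x$ with infinitely many $0$'s and $1$'s, the Schreier graph $\Gamma_x$ of $G_\omega$ with respect to its standard generating set $\{a, b_\omega, c_\omega, d_\omega\}$ is a bi-infinite line, and the $G_\omega$-orbit of $x$ coincides with the orbit of the ``Gray-code'' odometer through $x$. In particular, on every such orbit there is a canonical $\Z$-action by the ``next vertex along the line'' map.

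Next, I would build the subshift $(X_\omega,\varphi_\omega)$ by coding. Fix a finite alphabet $\A$ that records, at each vertex $y$ of a Schreier graph $\Gamma_x$, the labels of the edges in a ball of bounded radius around $y$ (equivalently, a bounded number of tree coordinates of $y$). Sliding this window along the line $\Gamma_x$ produces a bi-infinite word $\xi_y \in \A^{\Z}$; the desired subshift $X_\omega$ is the orbit closure under the usual shift of all these codings, with $\varphi_\omega$ the shift. Minimality of $X_\omega$ should follow from the minimality (indeed strict ergodicity) of the Gray-code odometer together with a repetitivity estimate: every finite pattern that appears once appears with bounded gaps, which in turn rests on the self-similar recursive definition of $G_\omega$ (the portrait of a group element is periodic in scale).

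Then I would define the embedding $G_\omega \hookrightarrow [[\varphi_\omega]]$. For $g\in G_\omega$ and $\xi\in X_\omega$, the group element $g$ moves the base vertex of the Schreier line by an integer $n(g,\xi)$, yielding a candidate $\hat g(\xi)=\varphi_\omega^{n(g,\xi)}\xi$. The crucial point is that, for fixed $g$, the offset $n(g,\cdot)$ is bounded and depends only on finitely many coordinates of $\xi$: this is because the action of $g$ on $\Gamma_x$ at any given vertex is determined by the labels in a bounded Schreier-graph neighborhood, an instance of the bounded-portrait (``finite-state'') property of Grigorchuk-type groups. Hence $\hat g \in [[\varphi_\omega]]$. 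Injectivity of $g\mapsto \hat g$ follows because $G_\omega$ acts faithfully on generic orbits (generic stabilizers are trivial), and those orbits embed densely in $X_\omega$. Landing inside the commutator subgroup, as claimed in the abstract, can then be read off from the fact that $a, b_\omega, c_\omega, d_\omega$ are involutions with trivial global displacement, using standard facts about $[[\varphi_\omega]]'$ from Matui.

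The main obstacle is packaging the encoding so that minimality of $X_\omega$ and the continuity of all cocycles $n(g,\cdot)$ hold simultaneously. Elements of $G_\omega$ of large word length can have arbitrarily large offsets, so one cannot prescribe a uniform coding radius for all of $G_\omega$; rather, for each fixed generator the relevant radius is bounded, and one must verify that this bound propagates compatibly through the coding of limit points in $X_\omega$. The repetitivity needed for minimality likewise relies on quantitative versions of the substitutional structure of the Schreier graphs, and this combinatorial step is where the real work of the proof will lie.
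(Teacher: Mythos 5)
Your overall strategy is the one the paper follows: code the (line-shaped) Schreier graphs of $G_\omega$ into a subshift over a finite alphabet of local edge-label data, get minimality from the self-similar recursive structure of those graphs, and send each group element to the homeomorphism of the orbit closure that shifts by the induced displacement along the line. However, there is one genuine gap and one step whose difficulty you underestimate.

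The gap is the case of eventually constant $\omega$. For such $\omega$ the labelling of the Schreier graph \emph{is} eventually periodic (the label at position $i$ is $\Lambda_{\omega(a_i)}$ where $(a_i)$ is the ruler sequence $1,2,1,3,1,2,1,4,\dots$, and if $\omega$ is constant from position $N$ on this becomes periodic of period $2^{N-1}$), so your orbit closure is a single finite periodic orbit, not a Cantor minimal system, and the infinite group $G_\omega$ cannot embed in its topological full group. Your appeal to faithfulness on generic orbits also breaks down here. The theorem as stated covers all $\omega$, so this case must be treated separately; the paper does so by noting that for eventually constant $\omega$ the group $G_\omega$ is virtually abelian, embeds in $\Z^n\rtimes S_n$, and that the latter embeds in $[[\varphi]]$ for \emph{every} Cantor minimal system $(X,\varphi)$ via Rokhlin-tower-type involutions and commuting first-return maps. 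Everything else in your sketch must then be run under the standing assumption that $\omega$ is not eventually constant, and that hypothesis is used twice: once for minimality/infiniteness of $X_\omega$, and once for injectivity. On the latter point, your phrase ``those orbits embed densely in $X_\omega$'' quietly assumes that the coding of an orbit is injective, i.e.\ that points of $X_\omega$ are aperiodic; without that, $\hat g(\xi)=\varphi_\omega^{k}(\xi)$ with $k\neq 0$ does not yet give $\hat g\neq e$. The paper closes this by showing the labelling is not eventually periodic (a $2$-adic argument on the period using the recursion $\tilde\Gamma_{n+1,\omega}=\tilde\Gamma_{n,\omega}*\Lambda_{\omega(n+1)}*\tilde\Gamma_{n,\omega}$) and invoking Morse--Hedlund. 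You correctly identify the repetitivity estimate as the remaining combinatorial work; that same recursion supplies it, so your deferral there is reasonable, but the aperiodicity and the degenerate-$\omega$ dichotomy need to be made explicit for the proof to stand.
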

  
Relying on properties of the groups $G_\omega$ proven in  \cite[Theorem 2.1, Corollary 3.2]{Grigorchuk:grigorchukgroups} we get
\begin{cor} \label{C: main}A finitely generated subgroup of the topological full group of a minimal subshift
  \begin{enumerate}
  \item  can have intermediate growth;
  \item can be an infinite torsion group;
  \item   can be residually finite without being elementary amenable.
\end{enumerate}
  \end{cor}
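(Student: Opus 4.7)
The plan is to deduce the corollary directly from Theorem \ref{main} combined with established properties of the Grigorchuk groups. Recall that every $G_\omega$ is finitely generated (by four involutions $a, b_\omega, c_\omega, d_\omega$) and acts faithfully on the binary rooted tree; it consequently embeds into the profinite group $\mathrm{Aut}(T_2)$, and in particular is residually finite.

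For items (1) and (2), I would choose any $\omega \in \{\mathbf{0,1,2}\}^{\N_+}$ in which each of the three symbols appears infinitely often. By \cite[Theorem 2.1, Corollary 3.2]{Grigorchuk:grigorchukgroups}, the corresponding $G_\omega$ is then an infinite torsion group of intermediate growth. The embedding $G_\omega \hookrightarrow [[\varphi_\omega]]$ supplied by Theorem \ref{main} immediately yields a finitely generated subgroup of the topological full group of a minimal subshift witnessing both properties. For (3) I would use the same $G_\omega$: it is residually finite by the previous paragraph, and it fails to be elementary amenable because, by a theorem of Chou, every finitely generated elementary amenable group of subexponential growth is virtually nilpotent, so by Gromov's polynomial growth theorem the intermediate growth established above precludes elementary amenability.

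The only substantive obstacle is Theorem \ref{main} itself; once that is granted, the corollary reduces to invoking Grigorchuk's original results, and a single well-chosen $\omega$, with each symbol appearing infinitely often, in fact exhibits all three behaviors simultaneously.
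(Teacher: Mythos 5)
Your proposal is correct and matches the paper's (implicit) argument: the corollary is deduced directly from Theorem \ref{main} together with the properties of $G_\omega$ quoted from Grigorchuk's work in Section \ref{S: Grigorchuk groups}, and choosing a single $\omega$ with all three symbols occurring infinitely often does witness all three items at once. Your extra derivation of non-elementary-amenability via Chou and Gromov is a valid (if redundant) substitute for simply citing Grigorchuk's Corollary 3.2, which already asserts it.
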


The question whether a subgroup of the topological full group of a Cantor minimal system can have intermediate growth was originally raised by Grigorchuk (private communication), and  it appears in   Matui \cite{Matui:exponentialgrowth}, where he shows that the group $[[\varphi]]$ and its derived subgroup $[[\varphi]]'$  have exponential growth whenever they are finitely generated; see also  Question (2c) in Cornulier's survey \cite{Cornulier:Bourbaki}.  Points 2 and 3  in Corollary \ref{C: main}  answer to Questions (2c)-(2d) and to part of Question (2b) asked by Cornulier in the same survey \cite{Cornulier:Bourbaki}.\\

The susbshift $(X_\omega,\varphi_\omega)$ in Theorem \ref{main} can be  defined as a space of labeled graph structures on the set of integers, that have the same finite patterns as the orbital Schreier graphs for the action of $G_\omega$ on the boundary of the binary rooted tree. Similar constructions of group actions based on graph colourings have been used, in this context, by  Elek and Monod   \cite{Elek-Monod:freesubgroup} and by van Douwen \cite{Douwen}. 

A minor modification of the construction allows to embed the groups $G_\omega$ in the \emph{commutator subgroup} of the topological full group of a minimal subshift, this is shown in Corollary \ref{C: commutator}. The latter  is a finitely generated simple group by results of Matui \cite[Theorem 4.9, Theorem 5.4]{Matui:simple}. Relying on a result from \cite{simpleliouville} we deduce the following statement in Theorem \ref{P: embedding}: every Grigorchuk group $G_\omega$ can be embedded in a finitely generated simple group with the Liouville property (in particular, amenable). Note that the topological full group of any Cantor minimal system is amenable by a result of Juschenko and Monod \cite{Juschenko-Monod:simpleamenable}. \\
 
Related constructions appear in Vorobets' articles \cite{Vorobets:substitution, Vorobets:Schreier}. In \cite{Vorobets:substitution} he studies dynamical properties of a minimal one-sided substitutional subshift, which can be shown to be a one-sided version of our $(X_\omega, \varphi_\omega)$ for the  periodic sequence $\omega=\mathbf{012\cdots}$ (the group $G_\omega$ corresponding to this $\omega$  is known as the \emph{first Grigorchuk group}). In \cite{Vorobets:Schreier} he studies the dynamics of the action of the first Grigorchuk group on the space of marked Schreier graphs.\\

The paper is structured as follows. Section 2 recalls the definition of Grigorchuk groups and the construction of their orbital Schreier graphs. Proofs concerning this construction are postponed to the Appendix at the end. Section 3 contains the definition of the subshift $(X_\omega, \varphi_\omega)$ and to the proof  of Theorem \ref{main}; this section can be read without reading the Appendix, but it requires notations from Section 2. In Section 4 we modify the construction to get an embedding in the commutator subgroup;  then we study the complexity of the subshift $(X_\omega, \varphi_\omega)$ and deduce the above-mentioned embedding result for Grigorchuk groups. 
\subsubsection*{Acknowledgements} I am grateful to Anna Erschler for several useful discussions, to Slava Grigorchuk for many interesting comments and for pointing out  the connection with Vorobets' work  \cite{Vorobets:substitution, Vorobets:Schreier}, and to Gidi Amir for explaining me the useful relation between the Gray code and  automata groups, first appeared in \cite[Section 10.3]{Bartholdi-Grigorchuk-Sunic:branchgroups}, that is used in the Appendix.
\section{Grigorchuk groups and their Schreier graphs}\label{S: Grigorchuk}
\subsection{Grigorchuk groups}
 
 \label{S: Grigorchuk groups}

Grigorchuk groups act on the infinite binary rooted tree by automorphisms. The binary rooted tree is identified with the set of binary strings $\{0,1\}^*$,  by declaring $v, w\in \{0, 1\}^*$ connected by an edge if there exists $x\in\{0, 1\}$ such that $w=vx$. This tree is rooted at the empty string $\varnothing$.

The groups $G_\omega$ are parameterized by the set $\Omega=\{\mathbf{0, 1, 2}\}^{\N_+}$, which is endowed with a shift map $\sigma:\Omega\to\Omega$ (note that the bold digits $\mathbf{0, 1, 2}$ shall not be confused with the digits $0, 1$ used to index the tree).  Given $\omega\in\Omega$, the Grigorchuk group $G_\omega$ is the group generated by the four automorphisms $a,b_\omega,c_\omega,d_\omega$ of the tree, that we now define. Let $\varepsilon$ be the non-trivial permutation of $\{0,1\}$. The generator $a$ does not depend on the choice of the  sequence $\omega$, and it acts on binary strings by
  \[ a( xv)= \varepsilon(x)v,\] 
  where the above formula holds for every $x\in \{0, 1\}$ and $v\in\{0, 1\}^*$.
   For $i,j\in\{\mathbf{ 0, 1, 2}\}$ let $\varepsilon_{ij}$ be the permutation of $\{0, 1\}$  which is equal to $\varepsilon$ if $i\neq j$ and to the identity if $i=j$. 
 The generators $b_\omega, c_\omega, d_\omega$ are defined  by the recursive rules
\begin{align*}b_\omega (0xv)=0\varepsilon_{\mathbf{2}\omega(1)}(x)v;\qquad b_\omega (1v)= 1b_{\sigma\omega}(v);\\
c_\omega (0xv)=0\varepsilon_{\mathbf{1}\omega(1)}(x)v;\qquad c_\omega (1v)= 1c_{\sigma\omega}(v);\\
 d_\omega (0xv)=0\varepsilon_{\mathbf{0}\omega(1)}(x)v;\qquad d_\omega (1v)= 1d_{\sigma\omega}(v);
  \end{align*}
  that hold for every $x\in\{0, 1\}, v\in \{0, 1\}^*$ and where $\sigma\omega\in\Omega$ is the shifted sequence.
    
Let us give another description of the generator $b_\omega$. If  $v\in\{0,1\}^*$ does not contain the digit $0$, or if the digit 0 appears only at the last position of $v$, then $b_\omega(v)=v$. Otherwise suppose that the first appearance of $0$ in $v$ is at position $j$. If $\omega(j)\neq \mathbf{2}$ then $b_\omega$ permutes the letter at position $j+1$ in $v$, while if $\omega(j)=\mathbf{2}$ then $b_\omega( v)=v$. An analogous description holds for $c_\omega, d_\omega$ after replacing $\mathbf{2}$ by $\mathbf{1, 0}$, respectively.    

Note that for every $\omega\in\Omega$ we have the relations $a^2=b_\omega^2=c_\omega^2=d_\omega^2=b_\omega c_\omega d_\omega=e$.\\

Grigorchuk proved in \cite[Theorem 2.1, Corollary 3.2]{Grigorchuk:grigorchukgroups} that 
\begin{enumerate}
\item the group $G_\omega$ is residually finite and amenable for every sequence $\omega$;
\item if $\omega$ eventually constant the group $G_\omega$ is virtually abelian, otherwise it has intermediate growth and it is not elementary amenable (recall that the class of elementary amenable groups is the smallest class of groups containing finite and abelian groups and which is closed by taking extensions, inductive limits, subgroups and quotients);
\item  the group $G_\omega$ is 2-torsion if $\omega$ has infinitely many appearances of each of the symbols $\mathbf{0, 1, 2}$, otherwise $G_\omega$ admits elements of infinite order. 

\end{enumerate}

 \subsection{Schreier graphs of Grigorchuk groups}\label{S: Schreier}
 The action of Grigorchuk groups on the binary tree extends to the boundary at infinity of the tree, identified with the set of right-infinite strings $\{0, 1\}^\infty$.

Hereinafter let $\rho$ be the constant 1-string $\rho=111\cdots\in \{0, 1\}^\infty$, and let $G_\omega\rho$ be its orbit.  We denote by $\Gamma_\omega$ the \emph{Schreier graph}  for the action of $G_\omega$ on the orbit of $\rho$, with respect to the generating set $S=\{a, b_\omega, c_\omega, d_\omega\}$. This is  the labelled   graph whose vertex set is $G_\omega\rho$ and such that $\gamma, \eta\in   G_\omega\rho$ are connected by an edge if there exists $s\in S$ such that $s\gamma=\eta$. Such an edge is labelled by $s$. Loops and multiple edges are allowed. Since all generators are involutions, the orientation of edges is unimportant and we shall work with non-oriented graphs.

The Schreier graphs $\Gamma_\omega$ can be described elementary, see Bartholdi and Grigorchuk \cite[Section 5]{Bartholdi-Grigorchuk:Hecke} where the graph $\Gamma_\omega$ is constructed for the sequence $\omega=\mathbf{012012}\cdots$ (the corresponding group $G_\omega$ is known as the \emph{first Grigorchuk group}). The construction of the graph $\Gamma_\omega$ for other choices of $\omega$ is also well-known  and can easily be adapted from the case of the first Grigorchuk group. We now recall this construction and some related facts; since we were not able to locate a reference for a generic $\omega$, we refer to the Appendix  for proofs.
\begin{figure}[h]
\centering
\vspace{-20pt}
\includegraphics[scale=.7]{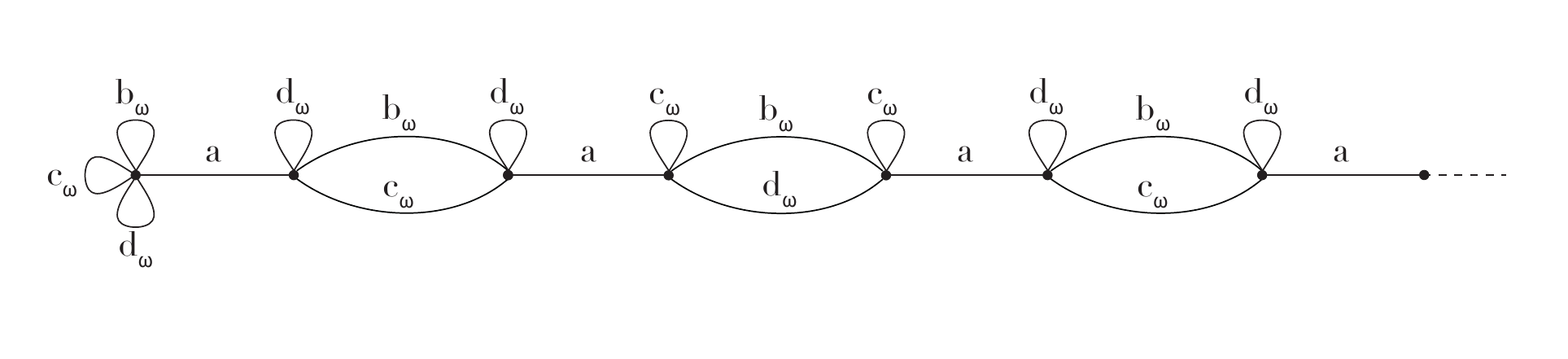}
\vspace{-30pt}
\caption{\label{F: graph} The beginning of the graph $\Gamma_\omega$ for $\omega=\mathbf{012012\cdots}$.}
\end{figure}

\begin{prop} \label{P: orbit} The orbit $G_\omega\rho$ consists exactly of all $\gamma\in\{0, 1\}^\infty$ that are cofinal with $\rho$.
\end{prop}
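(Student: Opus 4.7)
The plan is to establish the two inclusions of the proposition separately. The inclusion $G_\omega\rho\subseteq \{\gamma\in\{0,1\}^\infty : \gamma\text{ is cofinal with }\rho\}$ is the easy direction: each generator in $\{a,b_\omega,c_\omega,d_\omega\}$ changes at most one letter of any infinite binary string. For $a$ this is immediate from its formula, and for $b_\omega,c_\omega,d_\omega$ it follows from the alternative description given just after their recursive definition (each either fixes the string or flips a single letter at position $j+1$, where $j$ is the position of the first $0$). Consequently every $g\in G_\omega$, being a finite product of generators, alters only finitely many coordinates of $\rho$, so $g(\rho)$ is cofinal with $\rho$.

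For the reverse inclusion, my plan is to prove by induction on $n\geq 0$, simultaneously over all $\omega\in\Omega$, the stronger statement that $w\rho\in G_\omega\rho$ for every finite word $w\in\{0,1\}^n$. The base case $n=0$ is trivial. For the inductive step, write $w=xw'$ with $x\in\{0,1\}$ and $w'\in\{0,1\}^{n-1}$. Since $a(\rho)=0\rho$, it suffices to find, for each value of $x$, an element $h\in G_\omega$ fixing the vertex $x\in\{0,1\}$ of the tree whose section at $x$ maps $\rho$ to $w'\rho$: one then takes $g=h$ if $x=1$ and $g=ha$ if $x=0$.

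The key ingredient is a self-similarity computation for the subgroup $K=\langle b_\omega,c_\omega,d_\omega,ab_\omega a,ac_\omega a,ad_\omega a\rangle$ of $G_\omega$, all of whose elements fix both children of the root. Using the recursive definitions one verifies that the sections at vertex $1$ of $b_\omega,c_\omega,d_\omega$ are exactly $b_{\sigma\omega},c_{\sigma\omega},d_{\sigma\omega}$, whereas each of the conjugates $ab_\omega a,ac_\omega a,ad_\omega a$ restricts at vertex $1$ either to the identity or to the first-coordinate flip $a$, the identity arising precisely when $\omega(1)$ equals $\mathbf{2},\mathbf{1},\mathbf{0}$ respectively. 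Since exactly one of the symbols $\mathbf{0},\mathbf{1},\mathbf{2}$ equals $\omega(1)$, at least two of the three conjugates contribute $a$, so the section subgroup of $K$ at vertex $1$ contains $\langle a,b_{\sigma\omega},c_{\sigma\omega},d_{\sigma\omega}\rangle=G_{\sigma\omega}$. By the induction hypothesis applied to $\sigma\omega$ and to the word $w'$ of length $n-1$, there is $g'\in G_{\sigma\omega}$ with $g'(\rho)=w'\rho$; lifting it to some $h\in K$ whose section at vertex $1$ equals $g'$ gives $h(\rho)=1\cdot g'(\rho)=w\rho$, handling the case $x=1$. A symmetric computation shows that the section of $K$ at vertex $0$ also contains $G_{\sigma\omega}$ (with the roles of $b_\omega$ and $ab_\omega a$, and similarly for $c,d$, exchanged in the argument), which settles the case $x=0$.

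The main obstacle is this section computation for $K$, and specifically the case-analysis ensuring that the first-level flip $a$ appears in the section at each of the two children of the root regardless of the value of $\omega(1)$. This is precisely where the three generators $b_\omega,c_\omega,d_\omega$ (rather than just one of them) are used in an essential way; once the self-similarity lemma is in place the induction closes uniformly in $\omega$ and yields the proposition.
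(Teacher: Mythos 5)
Your proof is correct, but it takes a genuinely different route from the paper's. The forward inclusion is the same in both (each generator alters at most one letter). For the converse, the paper argues combinatorially: it isolates the two possible ``moves'' a generator can perform on a string (flip the first digit, via $a$; or flip the digit after the first $0$, via a suitable element of $\{b_\omega,c_\omega,d_\omega\}$, which exists for every $\gamma\neq\rho$) and asserts that iterating these moves from $\rho$ reaches every cofinal string --- a claim that is fleshed out in the Appendix through the Gray code enumeration of $\{0,1\}^*$, which the paper needs anyway to describe the labelling of the Schreier graph. You instead run an induction on the length of the modified prefix, uniformly in $\omega$, using the self-similar structure of $G_\omega$: the section computation for $K=\langle b_\omega,c_\omega,d_\omega,ab_\omega a,ac_\omega a,ad_\omega a\rangle$ is accurate (exactly one of the three conjugates has trivial section at vertex $1$, namely the one matching $\omega(1)$, so the section at each child of the root contains $\langle a,b_{\sigma\omega},c_{\sigma\omega},d_{\sigma\omega}\rangle=G_{\sigma\omega}$), and the lifting step is legitimate because the section map on the subgroup fixing the first level is a homomorphism. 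Your approach is self-contained, avoids the Gray code entirely, and is the standard ``branch group'' technique that generalizes readily to other self-similar groups; the paper's approach is shorter given that the Gray code machinery must be developed regardless for Lemma \ref{P: labelling}, and it additionally yields the linear structure of the Schreier graph, which your argument does not address (nor does the proposition require it to).
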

\begin{prop}\label{P: Schreier}
As an unlabelled graph,  $\Gamma_\omega$ does not depend on the sequence $\omega$ and is isomorphic to the half-line with additional loops and double edges shown in Figure \ref{F: graph} (apart from labels). The endpoint of the half-line is the vertex $\rho$. Moreover for every sequence $\omega$
\begin{itemize}
\item the three loops at $\rho$ are labelled by ``$b_\omega$'', ``$c_\omega$'', ``$d_\omega$'';
\item every simple edge is labelled ``$a$''.

\end{itemize}
The remaining edges (loops and double edges) are labelled by $b_\omega, c_\omega, d_\omega$. The labelling of these depends on $\omega$ and is described in Lemma \ref{P: labelling}.
\end{prop}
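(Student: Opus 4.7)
The plan is to identify $\Gamma_\omega$ explicitly by enumerating its vertices via a suitable (reflected binary Gray code) enumeration $(v_n)_{n\in\N}$ with $v_0 = \rho$, and then describing how each generator acts on this enumeration. By Proposition \ref{P: orbit}, such an enumeration exhausts the vertex set of $\Gamma_\omega$. The key combinatorial input from the Gray code, which I would develop in the Appendix, is that $v_n$ and $v_{n+1}$ differ in exactly one position: the first digit when $n$ is even, and the digit immediately following the first $0$ of $v_n$ when $n$ is odd.

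Next I analyze the four generators. The generator $a$ flips the first digit of any sequence, hence is a fixed-point-free involution on the vertex set; every vertex is therefore incident to exactly one $a$-edge, which is simple, and a parity check shows that $a$ matches $v_{2k}$ with $v_{2k+1}$. Each of $b_\omega, c_\omega, d_\omega$ fixes $\rho$, producing the three loops at $v_0$. For any other vertex $\gamma$, whose first $0$ lies at some position $k \geq 1$, unwinding the recursive definitions $k-1$ times yields that $b_\omega \gamma = \gamma$ iff $\omega(k) = \mathbf{2}$, and otherwise $b_\omega$ flips $\gamma(k+1)$; analogous statements hold for $c_\omega$ (with $\mathbf{1}$) and $d_\omega$ (with $\mathbf{0}$). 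Since $\mathbf{0}, \mathbf{1}, \mathbf{2}$ are distinct, exactly one of $b_\omega, c_\omega, d_\omega$ fixes $\gamma$ (contributing a loop at $\gamma$), while the remaining two send $\gamma$ to the same vertex obtained by flipping $\gamma(k+1)$ (contributing a double edge).

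Combining these observations with the Gray code property, the double edges join exactly the pairs $v_{2k+1}, v_{2k+2}$ (those differing at the position following the first $0$), the $a$-edges join $v_{2k}, v_{2k+1}$, there is a single loop at each $v_n$ for $n\geq 1$, and three loops at $v_0$. This yields the half-line structure claimed in the proposition, proves that every simple edge is labelled $a$, and identifies the three loops at $\rho$ as carrying the labels $b_\omega, c_\omega, d_\omega$. The further information on which of $b_\omega, c_\omega, d_\omega$ labels each loop and each double edge, as a function of $\omega$, is read off directly from the per-vertex analysis: at the pair $(v_{2k+1}, v_{2k+2})$ the two loops and the double edge are labelled by $\omega$ evaluated at the position of the first $0$ of $v_{2k+1}$. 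This is the content of Lemma \ref{P: labelling}.

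The principal obstacle is the bookkeeping needed to align the Gray code enumeration with the positions at which the generators flip digits. This is made manageable by the self-similar (recursive) structure of both the generators $b_\omega, c_\omega, d_\omega$ (via the recursion in $\sigma\omega$) and the reflected binary Gray code, which are governed by the same recursion. This shared recursion is precisely the connection between automata groups and the Gray code attributed to Amir in the acknowledgements, and is the main tool that I would exploit in the Appendix to make both the enumeration and the inductive step rigorous.
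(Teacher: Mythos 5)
Your proposal is correct, but it proves this proposition by a different route than the paper does. The paper's own proof of Proposition \ref{P: Schreier} is purely local: it observes (via Remark \ref{R: moves}) that $\rho$ carries three loops plus one simple $a$-edge, that every other vertex $\gamma$ carries exactly one loop, one double edge and one simple edge (because exactly one of $b_\omega,c_\omega,d_\omega$ fixes $\gamma$, determined by $\omega(m)$ where $m$ is the position of the first $0$, the other two agree on $\gamma$ and differ from $a\gamma$), and then concludes by the rigidity observation that the only connected unlabelled graph obeying these local rules is the half-line of Figure \ref{F: graph}. No enumeration of the orbit is needed at this stage; the Gray code is deferred entirely to the Appendix, where it is used to order the vertices by distance from $\rho$ (Lemma \ref{L: sequence vertices}) and to pin down the $\omega$-dependent labels (Lemma \ref{P: labelling}). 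You instead front-load the Gray code enumeration and use it to exhibit the half-line explicitly, identifying simple edges with the pairs $(v_{2k},v_{2k+1})$ and double edges with $(v_{2k+1},v_{2k+2})$. Both arguments are sound. The paper's local argument is shorter and self-contained for this proposition (though its final ``observe that the only connected graph\ldots'' step tacitly uses the inductive fact that each new vertex along the chain is genuinely new, which your explicit enumeration renders automatic); your version does more work up front but delivers Lemma \ref{L: sequence vertices} and the labelling data for Lemma \ref{P: labelling} as byproducts, so in effect you have merged the proposition with the Appendix. Your per-vertex analysis of $b_\omega,c_\omega,d_\omega$ (fixing $\gamma$ iff $\omega(k)$ equals $\mathbf{2},\mathbf{1},\mathbf{0}$ respectively, else flipping position $k+1$) matches the paper's description exactly and is the correct unwinding of the recursive definitions.
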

Before describing the labelling of $\Gamma_\omega$ we fix some notation.

\begin{defin}\label{D: labelling}\begin{enumerate}\item We denote  $\Theta, \Lambda_\mathbf{0}, \Lambda_\mathbf{1}, \Lambda_\mathbf{2}$  the four finite labelled graphs shown in Figure \ref{F: blocks}, and $\Xi$ the graph with one vertex and three loops, labelled by $b_\omega, c_\omega, d_\omega$. 
\item We denote $*$ the operation of  gluing two graphs by identifying two vertices. For well-definiteness, we  assume that all finite graphs that we consider have a distinguished ``leftmost vertex" and a ``rightmost vertex" (that will be clear from the context); the operation $*$ corresponds to identifying the rightmost  vertex of the first graph with the leftmost vertex of the second.

\end{enumerate}
\end{defin}
\begin{figure}[h]
\centering
\vspace{-20pt}
\includegraphics[scale=.7]{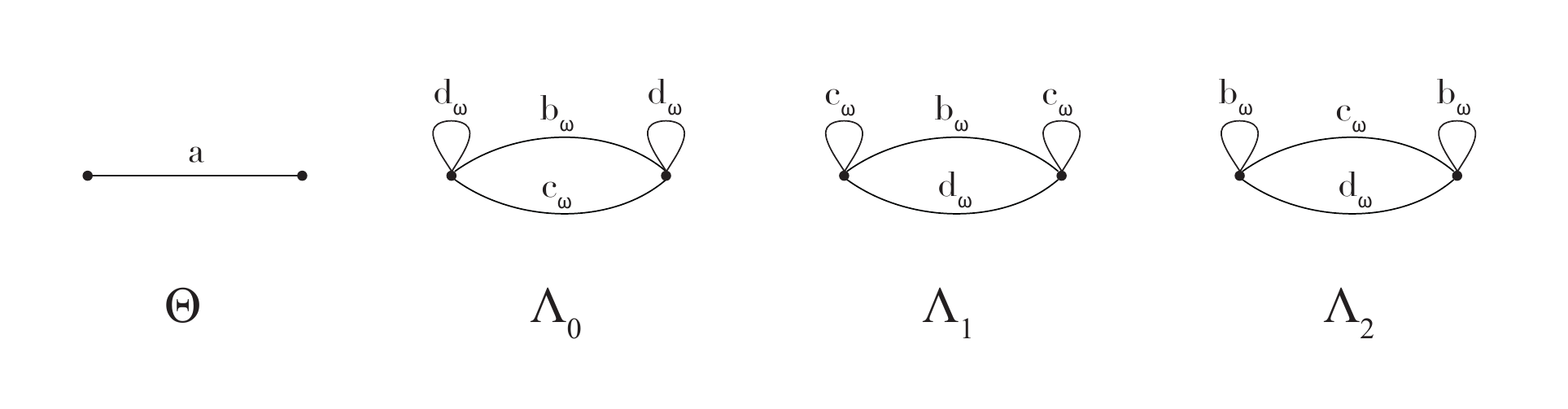}
\vspace{-20pt}
\caption{\label{F: blocks} The ``building blocks'' of $\Gamma_\omega$.}
\end{figure}

By Proposition \ref{P: Schreier}, with this notation  $\Gamma_\omega$  has the form
\[\Gamma_\omega=\Xi*\Theta*\Delta_{1, \omega}*\Theta*\Delta_{2, \omega}*\Theta*\Delta_{3, \omega}*\Theta\cdots, \] 
where $\Delta_{i, \omega}\in\{\Lambda_0, \Lambda_1, \Lambda_2\}$. 

It will be convenient to denote $\tilde{\Gamma}_\omega$  the modification of $\Gamma_\omega$  obtained by erasing the three loops at $\rho$, i.e.
 \[\tilde{\Gamma}_{\omega}=\Theta*\Delta_{1, \omega}*\Theta*\Delta_{2, \omega}*\Theta*\Delta_{3, \omega}\cdots.\]

We now state a lemma, that allows to construct $\tilde{\Gamma}_{\omega}$ recursively (part (i) is sufficient for this purpose). We refer to the Appendix for a proof.
\begin{lemma}\label{P: labelling}\label{L: self-similar}
\begin{itemize}
\item[(i)]For every $n\geq 0$, the the graph spanned by the first  $2^{n+1}$ vertices of $\tilde{\Gamma}_{\omega}$ is isomorphic to the graph $\tilde{\Gamma}_{n, \omega}$, defined recursively  by
\[\left\{\begin{array}{l}\tilde{\Gamma}_{1, \omega}=\Theta*\Lambda_{\omega(1)}*\Theta;\\ \tilde{\Gamma}_{n+1,  \omega}= \tilde{\Gamma}_{n,  \omega}*\Lambda_{\omega(n+1)}*\tilde{\Gamma}_{n,  \omega}, \end{array}\right.\]
where $\omega\in\{\mathbf{0, 1, 2}\}^{\N_+}$ is the sequence that defines $G_\omega$. 

\item[(ii)] The graph $\tilde{\Gamma}_{ \omega}$ has the following self-similarity property: for every $n\geq 1$ we have
\[\tilde{\Gamma}_{\omega}=\tilde{\Gamma}_{n, \omega}*\Delta_{1, \sigma^n\omega}*\tilde{\Gamma}_{n, \omega}*\Delta_{2, \sigma^n\omega}*\tilde{\Gamma}_{n, \omega}*\Delta_{3, \sigma^n\omega}*\tilde{\Gamma}_{n, \omega}*\Delta_{4, \sigma^n\omega}\cdots,\]
where $\sigma^n\omega$ is the  sequence $\omega$ shifted by $n$,  $\tilde{\Gamma}_{ \sigma^n\omega}=\Theta*\Delta_{1,\sigma^n\omega}*\Theta*\Delta_{2, \sigma^n\omega\cdots}$ is the unique decomposition of $\tilde{\Gamma}_{ \sigma^n\omega}$ with $\Delta_{i,\sigma^n\omega}\in \{\Lambda_0, \Lambda_1, \Lambda_2\}$, and $\tilde{\Gamma}_{n,  \omega}$ is the sequence of finite graphs constructed at (i).
\end{itemize}
\end{lemma}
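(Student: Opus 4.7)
My plan is to prove (i) by induction on $n$ and then deduce (ii) by applying (i) at the shifted sequence $\sigma^n\omega$ together with the resulting self-similarity. The central ingredient is the identity $b_\omega(1v) = 1\cdot b_{\sigma\omega}(v)$ and its analogues for $c_\omega, d_\omega$; iterating this shows that on strings $1^{k-1}0w$ with first $0$ at position $k$, the action of $b_\omega, c_\omega, d_\omega$ reduces to that of $b_{\sigma^{k-1}\omega}, c_{\sigma^{k-1}\omega}, d_{\sigma^{k-1}\omega}$ on $0w$. In particular, exactly one of the three generators fixes such a string (the one whose subscript digit equals $\omega(k)$, producing a loop) and the other two flip the $(k+1)$-th bit, so the labelled local structure at such a vertex depends only on $\omega(k)$.

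Base case $n=1$: The vertex $\rho = 1^\infty$ is fixed by each of $b_\omega, c_\omega, d_\omega$ (giving the three loops at $\rho$ that are discarded in $\tilde{\Gamma}_\omega$), while $a\rho = 0\cdot 1^\infty$ has first $0$ at position $1$. By the observation above, exactly one generator fixes $a\rho$ (a loop) and the other two both send it to $00\cdot 1^\infty$ (a double edge). The same analysis at $00\cdot 1^\infty$, together with the $a$-edge to $10\cdot 1^\infty$, yields the four-vertex labelled graph $\Theta * \Lambda_{\omega(1)} * \Theta = \tilde{\Gamma}_{1,\omega}$.

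Inductive step: Assume the first $2^{n+1}$ vertices of $\tilde{\Gamma}_\omega$ form $\tilde{\Gamma}_{n,\omega}$. A short direct computation (using the recursive definitions and the recursion already established for $\tilde{\Gamma}_{n,\omega}$) identifies its rightmost vertex as $v^\ast = 1^n 0\cdot 1^\infty$, whose first $0$ is at position $n+1$. Applying the central identity at depth $n$, the action of $b_\omega, c_\omega, d_\omega$ at $v^\ast$ produces a $\Lambda_{\omega(n+1)}$ block attached to $v^\ast$ and takes it to $v^{\ast\ast} = 1^n 00\cdot 1^\infty$. To close the step, I must verify that the next $2^{n+1}$ vertices form another labelled copy of $\tilde{\Gamma}_{n,\omega}$. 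The key point is that every vertex in any $\tilde{\Gamma}_{n,\omega}$-block has first $0$ at position at most $n$, so by the central identity its local labelled structure depends only on $\omega(1),\dots,\omega(n)$ — the same data determining the first copy. This yields $\tilde{\Gamma}_{n+1,\omega} = \tilde{\Gamma}_{n,\omega} * \Lambda_{\omega(n+1)} * \tilde{\Gamma}_{n,\omega}$.

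Part (ii) then follows by applying (i) to $\sigma^n\omega$: the decomposition $\tilde{\Gamma}_{\sigma^n\omega} = \Theta * \Delta_{1,\sigma^n\omega} * \Theta * \Delta_{2,\sigma^n\omega} * \cdots$ pulls back to the claimed self-similar expansion of $\tilde{\Gamma}_\omega$, each $\Theta$ in the coarser decomposition corresponding to a full copy of $\tilde{\Gamma}_{n,\omega}$ and each $\Delta_{i,\sigma^n\omega}$ to a $\Lambda$-block whose index is $\omega$ at a position shifted by $n$. The main obstacle is the verification in the inductive step that the second copy of $\tilde{\Gamma}_{n,\omega}$ is genuinely identically labelled; concretely, one must track, for each vertex in the second half, its first-zero position and confirm that the correspondence between vertices in the two halves intertwines the generator action on both sides. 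The Gray-code enumeration of vertices of $\tilde{\Gamma}_\omega$ referenced in the Appendix provides a natural framework for this bookkeeping.
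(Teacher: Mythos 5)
Your overall strategy (induction on $n$ for (i), then deducing (ii) from the shifted sequence) matches the paper's, and your identification of the middle vertex $1^n0\cdot 1^\infty$ and of the middle block $\Lambda_{\omega(n+1)}$ is correct. But the inductive step has a genuine gap that you name yourself and then do not close: you must show that the vertices $\rho_{2^{n+1}},\dots,\rho_{2^{n+2}-1}$ form a copy of $\tilde{\Gamma}_{n,\omega}$ \emph{with the same sequence of labels in the same order}, and the ``key point'' you offer --- that every interior vertex of such a block has its first $0$ at position at most $n$, so its local structure depends only on $\omega(1),\dots,\omega(n)$ --- is not sufficient. That observation only constrains which letters $\Lambda_{\omega(1)},\dots,\Lambda_{\omega(n)}$ can occur in the second half; it says nothing about the \emph{order} in which they occur, and a priori the second half could be a different word over the same alphabet. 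Worse, the order is in fact \emph{reversed}: the Gray code recursion $r^{(l+1)}_{2^l+j}=r^{(l)}_{2^l-1-j}0$ traverses the previous level backwards, so what you actually get is $\tilde{\Gamma}_{n,\omega}*\Lambda_{\omega(n+1)}*\tilde{\Gamma}_{n,\omega}^{R}$, where the superscript $R$ denotes left--right reversal. To conclude you must also carry the palindromic symmetry $\tilde{\Gamma}_{n,\omega}^{R}=\tilde{\Gamma}_{n,\omega}$ as part of the induction hypothesis. The paper does exactly this bookkeeping by introducing the sequence $a_i=m_{2i-1}$ of first-zero positions along the Gray code enumeration, establishing the recursion $a_{2^n}=n+1$, $a_{2^n+j}=a_{2^n-j}$, and invoking the symmetry of $\tilde{\Gamma}_{n,\omega}$.

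Your derivation of (ii) is likewise only an assertion: ``pulling back'' the decomposition of $\tilde{\Gamma}_{\sigma^n\omega}$ amounts to the claim that the block separating the $i$-th and $(i+1)$-th copies of $\tilde{\Gamma}_{n,\omega}$ is $\Lambda_{(\sigma^n\omega)(a_i)}=\Lambda_{\omega(a_i+n)}$, i.e.\ that $a_{i\cdot 2^n}=a_i+n$, which requires its own argument. The paper proves (ii) by a second induction on $m$, showing $\tilde{\Gamma}_{n+m,\omega}=\tilde{\Gamma}_{n,\omega}*\Delta_{1,\sigma^n\omega}*\cdots*\Delta_{2^{m+1}-1,\sigma^n\omega}*\tilde{\Gamma}_{n,\omega}$ and letting $m\to\infty$; you should supply this (or an equivalent computation with the sequence $(a_i)$) rather than leaving it at ``pulls back.''
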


 \section{Construction of the subshift}
 We now turn to the proof of Theorem \ref{main}.

We first address separately the degenerate case of an eventually constant  $\omega$.
 \begin{prop}
Suppose that the sequence $\omega$ is eventually constant. Then for every minimal Cantor system $(X, \varphi)$ there is an embedding of $G_\omega$ into $[[\varphi]]$. 
  \end{prop}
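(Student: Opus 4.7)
The plan is to leverage the fact, recalled from Grigorchuk's theorem in Section~\ref{S: Grigorchuk}, that $G_\omega$ is virtually abelian whenever $\omega$ is eventually constant. In each such case $G_\omega$ is finitely generated of polynomial growth. A direct inspection of the recursive definitions shows that if $\omega$ is eventually equal to the symbol $\mathbf{i}$, then past the constant tail one of the three generators $b_\omega,c_\omega,d_\omega$ becomes trivial and the other two coincide; consequently $G_\omega$ is (up to finite extension) isomorphic to the infinite dihedral group $D_\infty$, or, for the trivial cases, to a finite group. The task therefore reduces to embedding such an explicit virtually abelian group into $[[\varphi]]$ for an arbitrary minimal Cantor system $(X,\varphi)$.

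For this I would use Kakutani--Rokhlin towers. Since $\varphi$ is aperiodic on $X$, for every $N$ there is a clopen set $A\subset X$ whose first $N$ $\varphi$-translates $A,\varphi(A),\dots,\varphi^{N-1}(A)$ are pairwise disjoint. On the union of these translates any permutation of the $N$ levels realizes an element of $[[\varphi]]$, so in particular every finite dihedral group $D_N$ embeds into $[[\varphi]]$. Taking a nested sequence of such partitions with $N\to\infty$ and choosing the generating involutions coherently, one obtains in the limit an embedding $D_\infty\hookrightarrow [[\varphi]]$. Using disjointly supported variants of this construction one also gets commuting copies, which is enough to accommodate the virtually abelian extensions that actually arise as $G_\omega$ in the eventually constant case.

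The main obstacle is producing the honest infinite-order element: it cannot be taken to be $\varphi$ itself. A quick cocycle calculation shows that an involution $\sigma\in[[\varphi]]$ inverting $\varphi$ would be given by a continuous cocycle $n:X\to\Z$ satisfying $n(\varphi(x))=n(x)-2$, which is incompatible with $n$ being bounded on the compact space $X$. Hence the infinite-order element must be chosen inside the derived subgroup $[[\varphi]]'$, carefully arranged to admit an involution conjugating it to its inverse. Producing such a pair for every $(X,\varphi)$, through a diagonal argument across an increasing chain of Kakutani--Rokhlin partitions, is the one genuinely combinatorial step; after that, verifying that the finitely many relations of $G_\omega$ are satisfied is routine.
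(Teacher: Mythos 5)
Your reduction to the virtually abelian case and your use of disjoint clopen translates $U,\varphi(U),\dots,\varphi^{N-1}(U)$ to realize finite permutation groups inside $[[\varphi]]$ match the paper's strategy. But the proof has a genuine gap exactly at the step you yourself flag as ``the one genuinely combinatorial step'': you never actually produce the infinite-order element. The proposed mechanism --- ``taking a nested sequence of such partitions with $N\to\infty$ and choosing the generating involutions coherently, one obtains in the limit an embedding $D_\infty\hookrightarrow[[\varphi]]$'' --- does not work as stated. Every element of $[[\varphi]]$ has a continuous, hence bounded, orbit cocycle $n:X\to\Z$, while a limit of level-permutations over towers of unbounded height will generically have unbounded displacement; $[[\varphi]]$ is not closed under such limits, so no embedding of $D_\infty$ falls out of the finite dihedral pieces. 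Your cocycle observation that $\varphi$ cannot be conjugated to $\varphi^{-1}$ inside $[[\varphi]]$ is correct and shows you see the difficulty, but deferring the construction of an infinite-order element together with an inverting involution to an unexecuted ``diagonal argument'' leaves the proposition unproved. (A secondary imprecision: for eventually constant $\omega$ the group $G_\omega$ embeds in $D_\infty^n\rtimes S_n$ and need not be virtually $D_\infty$ up to finite extension, so you need commuting copies in any case.)

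The paper sidesteps your obstacle entirely: it never inverts an infinite-order element by an involution acting on its own support. Instead it embeds $D_\infty$ into $\Z^2\rtimes S_2$ as the subgroup generated by $(1,-1)$ and the coordinate swap, reducing everything to embedding $\Z^n\rtimes S_n$ into $[[\varphi]]$. There the $\Z^n$ factor is generated by $r_i=\varphi^i r_0\varphi^{-i}$, where $r_0$ is an explicit infinite-order element supported in $U$ (the first return map of $U$, which lies in $[[\varphi]]$ because return times to a clopen set in a minimal system are bounded, and has infinite order because the induced system on $U$ is an infinite minimal Cantor system); the $S_n$ factor is generated by the swaps $\sigma_{ij}$ of the levels, and conjugation by $\sigma_{01}$ sends $r_0r_1^{-1}$ to its inverse. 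If you replace your limiting argument by this single concrete element $r_0$ and the $\Z^2\rtimes S_2$ trick, your proof closes up and coincides with the paper's.
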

  
 \begin{proof} 
 
If $\omega$ is eventually constant, the group $G_\omega$ is virtually abelian, see \cite[Theorem 2.1 (3)]{Grigorchuk:grigorchukgroups}. In fact it can be embedded in a semi-direct product of the form $\Z^n\rtimes S_n$ for sufficiently large $n$, where the symmetric group $S_n$ acts on $\Z^n$ by permutation of the coordinates (strictly speaking, it is shown in \cite{Grigorchuk:grigorchukgroups} that it can be embedded in $D_\infty^n\rtimes S_n$, where $D_\infty$ is the infinite dihedral group; note for instance that $D_\infty^n\rtimes S_n$ embeds in $\Z^{2n}\rtimes S_{2n}$ by identifying $D_\infty$ with the subgroup of $\Z^2\rtimes S_2$ generated by $(1,-1)\in \Z^2$ and by the nontrivial $\epsilon\in S_2$ ). It is thus sufficient to show that for every minimal Cantor system  $(X, \varphi)$ and every $n\in \N$, the group $\Z^n\rtimes S_n$ embeds in $[[\varphi]]$. Let $U\subset X$ be a clopen set such that $U,\varphi(U),\ldots,\varphi^{n-1}(U)$ are disjoint. For every $0\leq i<j\leq n-1$ let $\sigma_{ij}\in [[\varphi]]$ be the involution with support $\varphi^i(U)\cup\varphi^j(U)$, which coincides with $\varphi^{j-i}$ on $\varphi^{i}(U)$, with $\varphi^{i-j}$ on $\varphi^j(U)$, and is the identity on the complement of $\varphi^i(U)\cup\varphi^j(U)$. The group generated by the involutions $\sigma_{ij}$ is isomorphic to the symmetric group $S_n$. Let $r_0\in[[\varphi]]$ be any element of infinite order with support contained in $U$ (for instance take the \emph{first return map} of $U$ extended to the identity on the complement of $U$). 
For $i=0,\ldots,n-1$ set $r_i=\varphi^ir_0\varphi^{-i}$. The support of $r_i$ is contained in $\varphi^i(U)$, thus the choice of $U$ implies that $r_i$ and $r_j$ commute if $i\neq j$. It follows that the group generated by $r_0,\ldots, r_{n-1}$ is free abelian of rank $n$. The group $\langle\sigma_{ij}, r_l\: :\: 0\leq i< j\leq n-1, \: 0\leq l\leq n-1\rangle$ is then isomorphic to $\Z^n\rtimes S_n$. \qedhere
 \end{proof}
 
{Hereinafter  we fix $\omega\in\Omega$ and assume that it is not eventually constant.}\\

By a \emph{pattern} in a labeled graph we mean the isomorphism class of a finite, connected labelled subgraph. 

\begin{defin}[Construction of the subshift $(X_\omega, \varphi_\omega)$]\begin{enumerate}
\item  Let $X$ be the the space of edge-labeled, connected graphs with vertex set $\Z$ such that 
 any two consecutive integers $n,n+1\in\Z$ are connected by one of the four labelled graphs in Figure \ref{F: blocks}, and there is no edge between $n,m\in\Z$ if $|n-m|>1$. 
 Endow $X$  with the natural product topology and with the shift map $\varphi$ induced by translations of $\Z$, which makes $(X, \varphi)$ conjugate to the full shift over the alphabet $\A=\{\Theta, \Lambda_{\mathbf{0}}, \Lambda_{\mathbf{1}}, \Lambda_\mathbf{2}\}$.

\item Let $X_\omega\subset X$ be the set of all $x\in X$ such that every pattern of $x$ appears in $\tilde{\Gamma}_\omega$ (recall that this is the graph obtained from $\Gamma_\omega$ after erasing the three initial loops).  This defines a  closed, shift-invariant subset of $X$ and we denote $(X_\omega, \varphi_\omega)$ the subshift obtained in this way.
\end{enumerate}

\end{defin} 

\begin{remark}
 Equivalently $X_\omega$ could be defined as the orbit-closure of any $x\in X$ isomorphic to the Schreier graph of the orbit of a point $\gamma\in\{0, 1\}^*$ lying outside the orbit of $\rho$ (these graphs are isomorphic to bi-infinite lines; their labellings admit  similar and only slightly more complicated recursive construction).\end{remark}

A labeled graph is said to be \emph{uniformly recurrent} if for every finite pattern in the graph exists a constant $R>0$ such that every ball of radius $R$ in the graph contains a copy of the pattern.

 \begin{lemma}\label{L: almost periodic}
The graph  $\tilde{\Gamma}_{\omega}$  is uniformly recurrent. Moreover its labelling is not eventually periodic along the half-line.
\end{lemma}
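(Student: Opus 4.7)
The plan is to prove uniform recurrence by exploiting the self-similar decomposition of $\tilde{\Gamma}_\omega$ given by Lemma \ref{L: self-similar}(ii), and to prove non-periodicity by reading off an explicit formula for the block sequence of $\tilde{\Gamma}_\omega$ in terms of $\omega$ and the $2$-adic valuation.

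For uniform recurrence, let $P$ be any finite pattern appearing in $\tilde{\Gamma}_\omega$. Since $\tilde{\Gamma}_\omega$ is a half-line (with extra loops and double edges), the finite labeled subgraph $P$ sits in some bounded initial window, so there is $n$ such that $P$ is contained in $\tilde{\Gamma}_{n,\omega}$. Now I apply Lemma \ref{L: self-similar}(ii): $\tilde{\Gamma}_\omega$ is a concatenation
\[\tilde{\Gamma}_\omega=\tilde{\Gamma}_{n,\omega}*\Delta_{1,\sigma^n\omega}*\tilde{\Gamma}_{n,\omega}*\Delta_{2,\sigma^n\omega}*\tilde{\Gamma}_{n,\omega}*\cdots,\]
with each connector $\Delta_{i,\sigma^n\omega}\in\{\Lambda_{\mathbf 0},\Lambda_{\mathbf 1},\Lambda_{\mathbf 2}\}$ of a fixed size. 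Therefore consecutive copies of $\tilde{\Gamma}_{n,\omega}$ lie within a uniformly bounded distance $R_n$ (namely $|\tilde{\Gamma}_{n,\omega}|+|\Lambda|$), and every ball of radius $2R_n$ in $\tilde{\Gamma}_\omega$ contains a complete copy of $\tilde{\Gamma}_{n,\omega}$, hence of $P$.

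For non-periodicity, write the decomposition $\tilde{\Gamma}_\omega=\Theta*\Delta_{1,\omega}*\Theta*\Delta_{2,\omega}*\cdots$; as eventual periodicity of the edge-labeling of a half-line of this form is equivalent to eventual periodicity of the sequence $(\Delta_{i,\omega})_{i\geq 1}\in\{\Lambda_{\mathbf 0},\Lambda_{\mathbf 1},\Lambda_{\mathbf 2}\}^{\N_+}$, it is enough to show the latter is not eventually periodic. I claim that for every $i\geq 1$,
\[\Delta_{i,\omega}=\Lambda_{\omega(v_2(i)+1)},\]
where $v_2(i)$ denotes the $2$-adic valuation of $i$. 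This follows by induction on $n$ from the recursion $\tilde{\Gamma}_{n+1,\omega}=\tilde{\Gamma}_{n,\omega}*\Lambda_{\omega(n+1)}*\tilde{\Gamma}_{n,\omega}$ of Lemma \ref{L: self-similar}(i): the middle connector of $\tilde{\Gamma}_{n+1,\omega}$ (the unique one at position $2^n$, which has the largest $2$-adic valuation among the first $2^{n+1}-1$ positions) is $\Lambda_{\omega(n+1)}$, and the positions on either side replay the pattern of $\tilde{\Gamma}_{n,\omega}$ by induction.

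Now assume for contradiction that $(\Delta_{i,\omega})_i$ is eventually periodic, with period $p\geq 1$ from some index on. Fix $t=v_2(p)$. For every sufficiently large integer $m>t$, take $i=2^m-p$; then $v_2(i)=t$ and $v_2(i+p)=v_2(2^m)=m$, so the formula above gives $\omega(t+1)=\omega(m+1)$. Letting $m\to\infty$ shows that $\omega$ is eventually constant equal to $\omega(t+1)$, contradicting our standing assumption that $\omega$ is not eventually constant. The main obstacle here is the verification of the explicit formula $\Delta_{i,\omega}=\Lambda_{\omega(v_2(i)+1)}$; once it is in hand, both uniform recurrence and the obstruction to periodicity are essentially mechanical.
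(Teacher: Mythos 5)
Your proof is correct, and it splits cleanly into a half that matches the paper and a half that does not. The uniform-recurrence argument is exactly the paper's: locate the pattern inside some $\tilde{\Gamma}_{n,\omega}$ using part (i) of Lemma \ref{P: labelling}, then read off from the decomposition in part (ii) that complete copies of $\tilde{\Gamma}_{n,\omega}$ occur with uniformly bounded gaps. For non-periodicity you take a genuinely different route. The paper never writes a closed formula for the block sequence; it applies part (ii) with $n=1$ to see that the odd-indexed blocks are all $\Lambda_{\omega(1)}$ while the even-indexed subsequence reproduces $(\Delta_{j,\sigma\omega})_{j\geq 1}$, deduces that any eventual period $T$ must be even with the shifted sequence eventually $T/2$-periodic, and iterates to get $2^n\mid T$ for all $n$, hence $T=0$. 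You instead extract the explicit ruler-sequence formula $\Delta_{i,\omega}=\Lambda_{\omega(v_2(i)+1)}$ from the recursion in part (i) --- the induction works because the middle connector of $\tilde{\Gamma}_{n+1,\omega}$ sits at position $2^n$ and $v_2(2^n+j)=v_2(j)$ for $1\leq j\leq 2^n-1$, which matches the identity $\Delta_{2^n+j,\omega}=\Delta_{j,\omega}$ supplied by the lemma --- and then kill a putative period $p$ by evaluating at $i=2^m-p$, where $v_2(i)=v_2(p)$ while $v_2(i+p)=m$, forcing $\omega$ to be eventually constant, contrary to the standing assumption. Both arguments exploit the same dyadic self-similarity; yours buys an explicit description of the underlying sequence (which would also make the later complexity bound transparent), while the paper's version avoids having to state and verify any closed formula. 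No gaps.
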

\begin{proof}
We take notations from Definition \ref{D: labelling} and Lemma \ref{P: labelling}.

By part (i) of Lemma \ref{P: labelling}, for every pattern of $\tilde{\Gamma}_{\omega}$ there exists $n$ such that the pattern appears in $\tilde{\Gamma}_{n, \omega}$. Using this, part (ii) of Lemma \ref{L: self-similar}  implies that $\tilde{\Gamma}_{\omega}$ is uniformly recurrent.

Suppose that the labelling of $\tilde{\Gamma}_{\omega}$ is eventually periodic, i.e. that the sequence of graphs $(\Delta_{j,\omega})_{j\geq 1}$ is eventually periodic with period $T$. Since $\tilde{\Gamma}_{1, \omega}=\Theta*\Lambda_{\omega(1)}*\Theta$, part (ii) of Lemma \ref{P: labelling} for $n=1$ implies that all odd terms in the sequence $(\Delta_{j,\omega})_{j\geq 1}$ are equal to $\Lambda_{\omega(1)}$, while the subsequence of even terms is equal to $(\Delta_{j,\sigma\omega})_{j\geq 1}$.
Since we assume that $\omega$ is not eventually constant, part (i) of Lemma \ref{P: labelling} easily implies that the sequence $(\Delta_{j, \omega})_{j\geq 1}$ is also not eventually constant, hence there are infinitely many $i$s for which  $\Delta_{i, \sigma \omega}\neq \Lambda_{\omega(1)}$. We conclude that $T$ is even, and that the sequence$(\Delta_{j,\sigma\omega})_{j\geq 1}$ is also eventually periodic with period $T/2$. Since $\omega$ is not eventually constant, neither is $\sigma\omega$, thus we may repeat the same reasoning for $\tilde{\Gamma}_{\sigma\omega}$ to conclude that $T/2$ is even and $(\Delta_{j,\sigma^2\omega})_{j\geq 1}$ is eventually periodic with period $T/4$. By induction $T$ is a multiple of $2^n$ for every $n$, thus $T=0$.\qedhere
\end{proof}

\begin{cor}\label{L: minimal}
The subshift $(X_\omega, \varphi_\omega)$ is infinite and minimal.
\end{cor}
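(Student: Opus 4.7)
The plan is to use the standard principle that a subshift defined by the language of a single infinite sequence is minimal and infinite precisely when that sequence is uniformly recurrent and not eventually periodic. Here the role of the sequence is played by $\tilde{\Gamma}_\omega$, viewed as a one-sided word over the alphabet $\A$, and both properties needed are supplied by Lemma \ref{L: almost periodic}; the whole argument is therefore a translation of that lemma into statements about $(X_\omega, \varphi_\omega)$.

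First I would check that $X_\omega$ is non-empty by a compactness argument: for each $n \geq 0$, place $\tilde{\Gamma}_\omega$ at positions $[-n,+\infty)$ of an element of $\A^\Z$ and extend arbitrarily (say by $\Theta$) to the left; any subsequential limit $x\in \A^\Z$ has the property that every fixed finite window of $x$ is eventually a window of $\tilde{\Gamma}_\omega$, hence $x\in X_\omega$. Next I would establish the key language equality: for every $x\in X_\omega$, the set of finite patterns appearing in $x$ coincides with the set of finite patterns of $\tilde{\Gamma}_\omega$. One inclusion is the definition of $X_\omega$. For the reverse, given a pattern $p$ of $\tilde{\Gamma}_\omega$, Lemma \ref{L: almost periodic} supplies $R=R(p)$ such that every length-$R$ window of $\tilde{\Gamma}_\omega$ contains $p$; any length-$R$ window $W$ of $x$ is itself a pattern of $\tilde{\Gamma}_\omega$, so $W$ appears as a length-$R$ window of $\tilde{\Gamma}_\omega$, and therefore contains $p$. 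In particular $p$ occurs in $x$ with gaps bounded by $R$.

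Minimality then follows at once: given $x,y\in X_\omega$, every finite window of $y$ is a pattern of $\tilde{\Gamma}_\omega$ by definition, and hence appears in $x$ by the previous step, so $y$ lies in the orbit closure of $x$ under $\varphi_\omega$ and every orbit is dense.

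For infiniteness I would argue by contradiction. If $X_\omega$ were finite, minimality would force it to be a single periodic orbit of some period $T$, and the common language of $X_\omega$ and $\tilde{\Gamma}_\omega$ would reduce to the set of finite windows of a single $T$-periodic bi-infinite sequence $z$. Each prefix of $\tilde{\Gamma}_\omega$ of length $n$ would then coincide with $z$ read from some offset $p_n\in\{0,\ldots,T-1\}$; pigeonhole on this finite set yields a fixed offset $p$ realised by infinitely many $n$, which forces $\tilde{\Gamma}_\omega$ to agree with the half-infinite sequence obtained by reading $z$ from position $p$. Then $\tilde{\Gamma}_\omega$ is eventually $T$-periodic, contradicting the second assertion of Lemma \ref{L: almost periodic}. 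This final pigeonhole step is the only delicate point of the proof; everything else is a direct translation of uniform recurrence.
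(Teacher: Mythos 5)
Your proof is correct and follows essentially the same route as the paper: non-emptiness via a limit of translates of $\tilde{\Gamma}_\omega$, the language equality from uniform recurrence (Lemma \ref{L: almost periodic}), minimality from the standard orbit-closure characterization, and infiniteness from the non-eventual-periodicity of $\tilde{\Gamma}_\omega$. The only (harmless) deviation is in the last step, where the paper cites the Morse--Hedlund theorem to rule out periodicity while you give a direct pigeonhole argument on offsets; both are valid.
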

Previous results by Vorobets in \cite{Vorobets:substitution}  imply minimality of a one-sided subshift given by a certain substitution, which is conjugate to the one-sided version of $(X_\omega, \varphi_\omega)$ for the periodic $\omega=\mathbf{012\cdots}$.
\begin{proof}
Let us first check that $X_\omega$ is not empty.  To see this, let $y\in X$ be any graph that agrees with $\tilde{\Gamma}_{ \omega}$ on the positive half-line. Then any cluster point of $(\varphi^n(y))_{n\geq 0}$  belongs to $X_\omega$.

Let $x\in X_\omega$ be arbitrary. It is a consequence of Lemma \ref{L: almost periodic} that every pattern of $\tilde{\Gamma}_\omega$ appears in $x$ infinitely many times, and that $x$ is uniformly recurrent. Namely every pattern of $\tilde{\Gamma}_\omega$ appears in every sufficiently long segment of $\tilde{\Gamma}_\omega$, thus in every sufficiently long segment of $x$ by the construction of $X_\omega$. This easily implies that the orbit of $x$ is dense in $X_\omega$, and that $(X_\omega, \varphi_\omega)$ is minimal by a well-known characterization of minimality (see \cite{Gottschalk:minimality} or \cite[Proposition 4.7]{Queffelec}). Finally the fact that $\tilde{\Gamma}_\omega$ is not eventually periodic (Lemma \ref{L: almost periodic}) implies that $x$ is not periodic, since $\tilde{\Gamma}_\omega$ and $x$ have the same finite patterns and periodicity can be easily characterized in terms of these (for instance by Morse and Hedlund's  theorem, see \cite[Theorem 4.3.1]{complexity}). Thus $X_\omega$ is infinite.
\end{proof}

\begin{remark}\label{R: every word appears}
It follows from the proof that every pattern in $\tilde{\Gamma}_\omega$ appears in every $x\in X_\omega$.
\end{remark}

\newcommand{\ba}{\bar{a}}
\newcommand{\bc}{\bar{c}_\omega}
\newcommand{\bb}{\bar{b}_\omega}
\newcommand{\bd}{\bar{d}_\omega}
\newcommand{\ob}{b_\omega}
\newcommand{\oc}{c_\omega}
\newcommand{\od}{d_\omega}

\begin{prop}
The Grigorchuk group $G_\omega$ embeds in the topological full group $[[\varphi_\omega]]$.
\end{prop}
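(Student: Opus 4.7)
The plan is to have each generator $s \in \{a, b_\omega, c_\omega, d_\omega\}$ of $G_\omega$ act on $X_\omega$ by ``re-centering'' each $x$ along its unique $s$-labeled edge at vertex $0$. Concretely, for every $s$ I would define a locally constant function $n_s \colon X_\omega \to \{-1, 0, 1\}$ by letting $n_s(x)$ be the endpoint of the $s$-edge at $0$ in $x$, with $n_s(x) = 0$ in the case of an $s$-loop; existence and uniqueness of such an edge at every vertex is visible from the four allowed building blocks in Figure~\ref{F: blocks}. I would then set $\bar s(x) := \varphi_\omega^{n_s(x)}(x)$, which lies in $[[\varphi_\omega]]$ by definition. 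A short computation using the symmetry of the $s$-edge between $0$ and $n_s(x)$ in $x$ gives $n_s(\bar s(x)) = -n_s(x)$, so each $\bar s$ is an involution of $X_\omega$.

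Next I would promote $s \mapsto \bar s$ to a homomorphism $G_\omega \to [[\varphi_\omega]]$. Unwinding the definitions shows that for any word $w = s_1 \cdots s_k$ in the generators, $\bar{s}_1 \cdots \bar{s}_k(x) = \varphi_\omega^{m_w(x)}(x)$, where $m_w(x) \in \Z$ is the endpoint of the walk in the labeled graph $x$ that starts at $0$ and successively follows edges labeled $s_k, s_{k-1}, \ldots, s_1$. The crucial point is that $m_w(x)$ depends only on the group element $g = s_1 \cdots s_k$ and not on the chosen word. By the definition of $X_\omega$, the ball $B_k(0) \subset x$ is label-isomorphic to some ball $B_k(\gamma) \subset \tilde{\Gamma}_\omega$ with $\gamma \in G_\omega \rho$; since the length-$k$ walk stays inside this ball, $m_w(x)$ corresponds under the isomorphism to the vertex $g\gamma$, whose displacement from $\gamma$ in the linear ordering of $\tilde{\Gamma}_\omega$ manifestly depends only on $g$. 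This yields a well-defined map $g \mapsto \bar g$, which is then automatically a group homomorphism.

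For injectivity, suppose $\bar g = \mathrm{id}$ on $X_\omega$. By Corollary~\ref{L: minimal}, $(X_\omega, \varphi_\omega)$ is infinite and minimal, hence $\varphi_\omega$ is aperiodic, and $\bar g(x) = x$ forces $m_g(x) = 0$ for every $x$. For any $\gamma \in G_\omega \rho$, the finite pattern $B_{|g|}(\gamma) \subset \tilde{\Gamma}_\omega$ appears at some position in any fixed $x \in X_\omega$ by Remark~\ref{R: every word appears}; re-centering $x$ at that position yields an element of $X_\omega$ to which the previous identification applies, so $m_g = 0$ translates to $g\gamma = \gamma$. Thus $g$ fixes the whole orbit $G_\omega \rho$, which is dense in $\{0, 1\}^\infty$; since $G_\omega$ acts faithfully and continuously on $\{0, 1\}^\infty$, this forces $g = e$.

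The main obstacle I foresee is the well-definedness of $\bar g$. One has to extract, from the purely combinatorial condition that every finite pattern of $x$ occurs in $\tilde\Gamma_\omega$, that the local walk from $0$ in $x$ genuinely reproduces the dynamics of $g$ on the Schreier graph at some vertex. This is exactly the feature the subshift $X_\omega$ was engineered to enjoy, and the same identification is then what makes the injectivity step run.
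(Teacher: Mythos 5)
Your proposal is correct and follows essentially the same route as the paper: the same definition of $\bar{a},\bar{b}_\omega,\bar{c}_\omega,\bar{d}_\omega$ by re-centering $x$ along the uniquely labelled edge at $0$, the same local identification of the ball around $0$ in $x$ with a segment of $\tilde{\Gamma}_\omega$ to verify that relations are preserved, and the same combination of Remark~\ref{R: every word appears} with aperiodicity of $(X_\omega,\varphi_\omega)$ for injectivity (the paper merely phrases both steps contrapositively, exhibiting a point $\gamma$ moved by $g$ at distance greater than $|g|$ from $\rho$ rather than invoking density and faithfulness of the boundary action). The one point to tighten is your claim that $m_g=0$ yields $g\gamma=\gamma$ for \emph{every} $\gamma\in G_\omega\rho$: when $\gamma$ lies within distance $|g|$ of $\rho$ the ball $B_{|g|}(\gamma)$ in $\tilde{\Gamma}_\omega$ is truncated and its occurrences in $x$ sit inside strictly larger balls, so the walk correspondence need not track the action of $g$ at $\gamma$ (which uses the loops at $\rho$ absent from $\tilde{\Gamma}_\omega$); this is harmless because the points of the orbit at distance greater than $|g|$ from $\rho$ are already dense in $\{0,1\}^\infty$, so your faithfulness argument goes through unchanged.
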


\begin{proof}
 Let $x\in X_\omega$. By construction, for every $f\in\{a, \ob, \oc, \od\}$  the vertex  $0\in\Z$ is the endpoint of exactly one edge in $x$  which is labelled by $f$  (this edge is possibly a loop at 0).  We define four elements $\ba, \bb,\bc,\bd$ of the topological full group  $[[\varphi_\omega]]$ as follows: every $\bar{f}\in\{\ba, \bb, \bc, \bd\}$  translates $x\in X_\omega$ in the direction where the label $f$ is, that is $\bar{f}x=\varphi_\omega(x)$ (respectively $\bar{f}x=\varphi^{-1}_\omega(x)$,   $\bar{f}x=x$)  if the label $f$ is on an edge connecting 0 and 1 (respectively on an edge connecting 0 and -1,  on the loop at 0).   It is clear that $\ba, \bb,\bc,\bd\in[[\varphi_\omega]]$.

We claim that the map $\iota:a\mapsto \ba, \ob\mapsto\bb, \oc\mapsto\bc, \od\mapsto\bd$ extends to an injective group homomorphism $\iota: G_\omega\to[[\varphi_\omega]]$.

To see that $\iota$ extends to a well-defined homomorphism, it is sufficient to check that relations are respected, i.e. that for every $n\in\N$ and $h_1,\ldots,h_n\in \{a, \ob, \oc, \od\}$ such that $h_1\cdots h_n=e$ in $G_\omega$ we have $\bar{h}_1\cdots\bar{h}_n=e$ in $[[\varphi_\omega]]$. Suppose that $\bar{h}_1\cdots\bar{h}_n\neq e$ in $[[\varphi_\omega]]$. Then there exists a point $x\in X_\omega$ such that $\bar{h}_1\cdots \bar{h}_nx\neq x$.  Write $x|_n$ for the finite subgraph of $x$ spanned by the interval $[-n,n]\subset \Z$. 
Observe that the fact whether $x$ is fixed or not by  $\bar{h}_1\cdots \bar{h}_n$ only depends on $x$ through $x|_n$. By the construction of $X_\omega$, the graph $x|_n$ isomorphic to a labeled  subgraph of $\tilde{\Gamma}_\omega$, hence of $\Gamma_\omega$. Let $\gamma\in\Gamma_\omega$ be the midpoint of this subgraph. It follows from the definition of $\ba,\bb, \bc, \bd$ that $h_1\cdots h_n\gamma\neq \gamma$. We conclude that $h_1\cdots h_n\neq e$ in $G_\omega$. 

The verification that $\iota$ is injective is similar. Let $h_1,\cdots, h_n\in \{a, \ob, \oc, \od\}$ be such that $h_1\cdots h_n\neq e$ in $G_\omega$. Then there is a vertex of the rooted tree, say $v\in\{0, 1\}^*$, which is moved by $h_1\cdots h_n$. Recall that the orbit $G_\omega\rho$  consists exactly of all sequences $\gamma\in\{0,1\}^*$ that are cofinal with $\rho$ (Proposition \ref{P: orbit}). In particular, $v$ is a prefix of infinitely many such sequences, each of which is moved by $h_1\cdots h_n$. Thus we can find  $\gamma\in\Gamma_\omega$ lying at a distance greater than $n$ from $\rho$, and such that $h_1\cdots h_n\gamma\neq \gamma$. Since we have chosen  $\gamma$ at a distance greater than $n$ from $\rho$, $\gamma$ is the midpoint of a connected graph of length $2n$  in $\Gamma_\omega$. This graph is also a  subgraph of $\tilde{\Gamma}_\omega$.  By Remark \ref{R: every word appears} and by shift-invariance, there exists $x\in X_\omega$ such that $x|_n$ is isomorphic to this graph (recall that $x|_n$ denotes the finite subgraph of $x$ spanned by $[-n, n]$). Again by  construction of $\ba, \bb, \bc, \bd$, we have $\bar{h}_1\cdots \bar{h}_n x=\varphi_\omega^{\pm k}(x)$, where $k> 0$ is the distance by which $\gamma$ is displaced on $\Gamma_\omega$ by the action of $h_1\cdots h_n$. But Corollary \ref{L: minimal} implies that there is no periodic point for $\varphi_\omega$, thus $\bar{h}_1\cdots\bar{h}_n (x)=\varphi_\omega^{\pm k}(x)\neq x$, and so $\bar{h}_1\cdots\bar{h}_n\neq e$ in $[[\varphi_\omega]]$. We conclude that $\iota$ is injective. 
\end{proof}

\begin{remark}\label{R: original action}
The usual action of $G_\omega$ on the Cantor set identified with the boundary of the tree $\{0, 1\}^\infty$ does not belong to the topological full group of a minimal homeomorphism of $\{0, 1\}^\infty$. To see this, note that  $\ob, \oc, \od$ fix $\rho$ but act non-trivially on every neighbourhood of $\rho$. This can not happen in the topological full group of a minimal homeomorphism.
\end{remark}

\section{Embedding $G_\omega$ in a simple Liouville group}

It is a classical result, due to Hall \cite{Hall}, Gorjuskin \cite{Gorjuskin} and Schupp \cite{Schupp} that every countable group can be embedded in a finitely generated simple group. These constructions always yield ``large'' ambient groups (e.g.\ non-amenable) even if the starting  group is ``small''. In this section we deduce the following consequence for the groups $G_\omega$ from the construction presented in Section 3.
 \begin{thm}\label{P: embedding}
Every Grigorchuk group $G_\omega$ can be embedded in a finitely generated, simple group that has trivial Poisson-Furstenberg boundary for every symmetric, finitely supported probability measure on it  (in other words, it has \emph{Liouville property} and in particular it is amenable).
\end{thm}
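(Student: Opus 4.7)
The plan is to realize the ambient simple Liouville group as the commutator subgroup $[[\varphi_\omega]]'$ of the topological full group constructed in Section 3. The embedding of $G_\omega$ into $[[\varphi_\omega]]'$ will be produced by a minor modification of the construction from Section 3 (this is the content of Corollary \ref{C: commutator}). Once this is in place, the finite generation and simplicity of $[[\varphi_\omega]]'$ follow directly from Matui's theorems \cite[Theorem 4.9, Theorem 5.4]{Matui:simple}, given that $(X_\omega, \varphi_\omega)$ is an infinite minimal subshift by Corollary \ref{L: minimal}. Amenability is automatic from Juschenko--Monod \cite{Juschenko-Monod:simpleamenable}, but the Liouville property is strictly stronger, so additional input is needed.

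The strategy for the Liouville property is to invoke the criterion of \cite{simpleliouville}, which guarantees that the derived topological full group $[[\varphi]]'$ of a minimal subshift is Liouville as soon as the complexity function of the subshift grows sufficiently slowly (subexponentially, in fact essentially linearly). The remaining technical work is therefore to bound the complexity function $p_\omega(n)$ of $(X_\omega, \varphi_\omega)$, that is, the number of distinct patterns of length $n$ that occur in points of $X_\omega$. By the minimality proved in Corollary \ref{L: minimal} and Remark \ref{R: every word appears}, these patterns coincide with the finite patterns of $\tilde{\Gamma}_\omega$, so the task reduces to counting patterns of $\tilde{\Gamma}_\omega$.

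The counting will exploit the self-similar recursive description of $\tilde{\Gamma}_\omega$ given by Lemma \ref{L: self-similar}(ii): the graph decomposes as an infinite concatenation of blocks $\tilde{\Gamma}_{n,\omega}$ (each of combinatorial length $2^{n+1}$) glued along ``connector'' letters $\Delta_{i,\sigma^n\omega}$ ranging in the three-element set $\{\Lambda_{\mathbf{0}}, \Lambda_{\mathbf{1}}, \Lambda_{\mathbf{2}}\}$. Choosing $n$ with $2^{n+1} \leq k < 2^{n+2}$, any pattern of length $k$ of $\tilde{\Gamma}_\omega$ is a window into a word of shape $\tilde{\Gamma}_{n,\omega} * \Delta * \tilde{\Gamma}_{n,\omega}$ for some $\Delta$ in a three-letter alphabet, and the number of possible positions of such a window is at most a constant multiple of $2^{n+1} \leq 2k$. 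This will yield a linear bound $p_\omega(k) \leq C k$ for an absolute constant $C$, which is well within the hypothesis required by \cite{simpleliouville}.

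I expect the principal obstacle to be not the combinatorics of pattern-counting (which is routine from the self-similar description) but rather the careful matching of the complexity bound with the precise quantitative hypothesis of the Liouville criterion of \cite{simpleliouville}; one must be attentive to whether that criterion is stated for two-sided subshifts, whether it applies to any element of $[[\varphi_\omega]]'$ or only to a specified generating set, and whether the Liouville conclusion is claimed for all symmetric finitely supported measures or only for a distinguished one. Everything else --- embedding, finite generation, simplicity, amenability --- will be an immediate consequence of the previous sections together with the cited theorems of Matui and Juschenko--Monod.
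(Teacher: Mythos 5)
Your proposal follows essentially the same route as the paper: pass to the derived topological full group of a minimal subshift (finitely generated and simple by Matui, via Corollary \ref{C: commutator}), and obtain the Liouville property from the complexity criterion of \cite{simpleliouville} together with a linear bound on the complexity of $(X_\omega,\varphi_\omega)$ extracted from the self-similar decomposition of Lemma \ref{L: self-similar}. Two small points to repair. First, the ambient group is not literally $[[\varphi_\omega]]'$: nothing guarantees that the involutive generators $\bar a,\bar b_\omega,\bar c_\omega,\bar d_\omega$ lie in the commutator subgroup of $[[\varphi_\omega]]$ itself. The paper instead doubles the subshift, setting $Y=X_\omega\times\{1,2\}$ with $\psi(x,1)=(x,2)$, $\psi(x,2)=(\varphi_\omega(x),1)$, so that each involution $g$ becomes the commutator $[g_1,\tau]$ in $[[\psi]]$; consequently the Liouville criterion must be applied to $(Y,\psi)$, not to $(X_\omega,\varphi_\omega)$, and one needs the extra (easy) estimate $\rho_Y(n)\le 2\rho_{X_\omega}(\lceil n/2\rceil)$ of Lemma \ref{L: commutator}(ii) to see that linear complexity survives the doubling --- a step your outline omits. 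Second, with your scale choice $2^{n+1}\le k<2^{n+2}$ a window of length $k$ can straddle two consecutive connectors $\Delta_{i,\sigma^n\omega}$, $\Delta_{i+1,\sigma^n\omega}$ and hence need not embed in a single $\tilde{\Gamma}_{n,\omega}*\Delta*\tilde{\Gamma}_{n,\omega}$; you should take $n$ minimal with $k\le 2^{n+1}$ (as the paper does), or else count pairs of connectors, which still gives a linear bound with a worse constant. Neither issue changes the architecture of the argument, and your anticipated ``principal obstacle'' (matching the hypotheses of the criterion) is indeed where the only real care is needed.
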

The class of groups with the Liouville property contains the class of subexponentially growing groups (Avez \cite{Avez}) and it is contained in the class of amenable groups (this is due to Furstenberg, see  \cite[Theorem 4.2]{Kaimanovich-Vershik}).

Recall that the \emph{complexity} of  a subshift $(X, \varphi)$ over a finite alphabet $\A$ is the function $\rho_X:\N\to\N$ that associates to $n$  the number of finite words in the alphabet $\A$ that appear as subwords of sequences in $X$. The proof of Theorem \ref{P: embedding} Êrelies on the following result from \cite{simpleliouville}.
 \begin{thm}[Theorem 1.2 in \cite{simpleliouville}]\label{T: complexity liouville}
 Let $(X,\varphi)$ be a subshift without isolated periodic points whose complexity satisfies $\rho_X(n)=o(\frac{n^2}{\log^2n})$. Then every symmetric, finitely supported probability measure on $[[\varphi]]$ has trivial Poisson-Furstenberg boundary.
 \end{thm}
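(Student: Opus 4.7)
The plan is to reduce to the Kaimanovich-Vershik entropy criterion: since $G := [[\varphi]]$ is amenable (by Juschenko-Monod \cite{Juschenko-Monod:simpleamenable}) and $\mu$ is symmetric with finite support, triviality of the Poisson-Furstenberg boundary of $(G, \mu)$ is equivalent to the vanishing of the asymptotic entropy $h(\mu) := \lim_n H(\mu^{*n})/n$. The goal becomes showing that $H(\mu^{*n}) = o(n)$.

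I would first exploit the structure of $[[\varphi]]$: every $g \in [[\varphi]]$ is determined by its orbit cocycle $c_g\colon X \to \Z$, which is locally constant on cylinder sets of the subshift. For $\mu$ supported on a finite generating set $S$, there exist uniform constants $C, R$ such that each $s \in S$ satisfies $|c_s| \leq C$ and $c_s(x)$ depends only on the window $x|_{[-R, R]}$. Consequently, the product $h_n = \xi_1 \cdots \xi_n$ has cocycle $c_{h_n}$ of magnitude at most $Cn$, determined by a window of size $O(n)$. A naive enumeration of all possible (partition, cocycle) pairs with these features yields at best $\log |\mathrm{supp}(\mu^{*n})| = O(\rho_X(O(n)) \log n) = o(n^2/\log n)$, which is too weak for a sublinear entropy bound.

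Instead I would follow an inverted-orbit approach in the spirit of Erschler. Fix a generic aperiodic point $x_0 \in X$, which exists because $(X, \varphi)$ has no isolated periodic points. The cocycle evaluation $k \mapsto c_{h_k}(x_0)$ is a $\Z$-valued random walk obtained by projection through the action on the orbit $\mathcal{O}(x_0) \simeq \Z$; being one-dimensional and (by symmetry) centered, it is recurrent, and its range up to time $n$ is $O(\sqrt{n}\log n)$ with overwhelming probability. The idea is then to encode $h_n$ by two pieces of data: (i) the local configuration of $x_0$ on the window of size $O(\sqrt{n}\log n)$ covered by the trajectory, of which there are only $\rho_X(O(\sqrt{n}\log n)) = o(n/\log n)$ possibilities (contributing $O(\log n)$ bits of entropy thanks to the hypothesis $\rho_X(n) = o(n^2/\log^2 n)$), together with (ii) the trajectory of the random walk itself, whose entropy must be controlled by using that its increments are functions of only the local subshift data seen at each step.

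The main obstacle is to rigorously bound the entropy of the cocycle trajectory, which a priori could be as large as $nH(\mu)$; the key point is that conditionally on the local configuration of $x_0$ the increments become effectively deterministic at most positions visited, cutting the entropy contribution drastically. Combining this conditional argument with the complexity hypothesis and the recurrence of the induced $\Z$-walk should yield $H(\mu^{*n}) = o(n)$. The no-isolated-periodic-points hypothesis is essential here: it guarantees that $\mathcal{O}(x_0)$ is infinite for a dense set of base points and that the orbital cocycle faithfully records the random walk dynamics, without which the above encoding could collapse onto a finite invariant set and lose its entropic content.
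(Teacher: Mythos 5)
A preliminary remark: the paper contains no proof of this statement. It is imported verbatim as Theorem~1.2 of \cite{simpleliouville}, so there is no internal argument to compare yours against; I can only assess your outline on its own terms and against the argument of that reference. Your opening reduction is essentially right, though note that the Kaimanovich--Vershik--Derriennic entropy criterion (trivial boundary iff $h(\mu)=0$ for finite-entropy $\mu$) holds for every countable group, so the appeal to Juschenko--Monod amenability is unnecessary --- and somewhat backwards, since the Liouville property being proved already implies amenability. You have also correctly identified the two real ingredients: elements of $[[\varphi]]$ are encoded by their orbit cocycles on cylinders, and the diffusive behaviour of the induced displacement on orbits shrinks the relevant window from the naive $O(n)$ to roughly $O(\sqrt{n})$, which is what makes the threshold $n^2/\log^2 n$ appear.

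Nevertheless there is a genuine gap, located exactly where you flag ``the main obstacle.'' First, your proposed encoding --- the configuration of a single point $x_0$ on the visited window together with the trajectory of the induced $\Z$-walk --- determines at most the restriction of $h_n$ to the orbit of $x_0$ on that window, not the group element $h_n$ itself; to pin down $h_n$ one must record the cocycle value $c_{h_n}$ on \emph{every} pattern of the relevant length, and the entropy bound one actually needs has the multiplicative shape $H(\mu^{*n})\lesssim \rho_X(L_n)\cdot\max_w H\bigl(c_{h_n}(x_w)\bigr)$, a sum over patterns $w$ each contributing $O(\log n)$ bits; it is precisely this product that the hypothesis $\rho_X(n)=o(n^2/\log^2 n)$ is calibrated to make $o(n)$, and your sketch never assembles it. Second, the assertion that $k\mapsto c_{h_k}(x_0)$ is a centered, recurrent one-dimensional random walk with range $O(\sqrt{n}\log n)$ is unjustified: the increments $c_{\xi_{k+1}}(h_k x_0)$ depend on the current configuration, so this is neither an i.i.d.\ walk nor obviously a martingale (symmetry of $\mu$ does not immediately center it), and establishing the diffusive estimate for the displacement is the technical heart of the proof in \cite{simpleliouville}, not a fact one can quote. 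Third, the claim that ``conditionally on the local configuration the increments become effectively deterministic'' is asserted rather than proved, and as stated it is not clear it is even true. As it stands the proposal is a plausible strategy outline with its decisive quantitative step missing, not a proof.
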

  The next lemma allows to modify the construction in Section 3 to get an embedding in the commutator subgroup.

\begin{lemma}\label{L: commutator}
Let $(X,\varphi)$ be a Cantor minimal system and let $H<[[\varphi]]$ be the subgroup generated by elements of order 2. Then 
\begin{itemize}
\item[(i)] there exists a minimal Cantor system $(Y, \psi)$ and an embedding of $H$ into $[[\psi]]'$; 
\item[(ii)] if moreover $(X, \varphi)$ is a subshift with complexity function $\rho_X$, the system $(Y, \psi)$ in (i) can be chosen to be a subshift  whose complexity $\rho_Y$ satisfies
\[\rho_Y(n)\leq 2\rho_X(\lceil n/2\rceil).\]
\end{itemize}
\end{lemma}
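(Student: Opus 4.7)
The plan is to construct $(Y,\psi)$ as a ``doubling'' of $(X,\varphi)$ that creates enough room to rewrite every involution of $[[\varphi]]$ as a commutator in $[[\psi]]$. Abstractly, I take $Y=X\times\{0,1\}$ with $\psi(x,0)=(x,1)$ and $\psi(x,1)=(\varphi(x),0)$; then $\psi^{2}$ preserves each clopen sheet $X\times\{i\}$ and restricts to $\varphi$ there, so minimality of $(X,\varphi)$ yields minimality of $(Y,\psi)$. For (ii), I realize the same system concretely as a subshift over the alphabet $\A\cup\{\star\}$, consisting of all bi-infinite sequences in which the marker letter $\star$ and the letters of $\A$ strictly alternate and whose $\A$-subsequence is a point of $X$. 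A length-$n$ window in such a sequence is determined by the parity of the position of its first $\A$-letter (two possibilities) together with a factor of $X$ of length at most $\lceil n/2\rceil$, which gives $\rho_{Y}(n)\leq 2\rho_{X}(\lceil n/2\rceil)$.

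To embed $H$ I work in the abstract model. For $h\in[[\varphi]]$ define $\bar h\in[[\psi]]$ by $\bar h(x,\epsilon)=(h(x),\epsilon)$: since $\psi^{2}$ acts as $\varphi$ on each sheet, if locally $h=\varphi^{k(x)}$ then locally $\bar h=\psi^{2k(x)}$, so $\bar h\in[[\psi]]$, and $h\mapsto\bar h$ is an injective homomorphism $[[\varphi]]\hookrightarrow[[\psi]]$. Write $\bar h=\bar h_{0}\bar h_{1}$, where $\bar h_{i}$ acts as $\bar h$ on the sheet $X\times\{i\}$ and as the identity on $X\times\{1-i\}$; both $\bar h_{i}$ lie in $[[\psi]]$, have disjoint supports, and commute. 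Let $\tau\in[[\psi]]$ be the natural swap of the two sheets: $\tau(x,0)=(x,1)=\psi(x,0)$ and $\tau(x,1)=(x,0)=\psi^{-1}(x,1)$, so $\tau$ is locally a power of $\psi$ and an involution. A direct check yields $\tau\bar h_{0}\tau^{-1}=\bar h_{1}$. Hence, when $h$ is an involution, so is $\bar h_{0}$, and
\[\bar h=\bar h_{0}\bar h_{1}=\bar h_{0}\,\tau\bar h_{0}\tau^{-1}=[\bar h_{0},\tau]\in[[\psi]]'.\]
Since $H$ is generated by involutions, $h\mapsto\bar h$ restricts to an embedding $H\hookrightarrow[[\psi]]'$.

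The commutator identity above is the conceptual core, and the basic verifications (minimality of $(Y,\psi)$, the inclusions $\bar h,\bar h_{i},\tau\in[[\psi]]$, and the conjugation relation for $\tau$) are direct. The main obstacle I anticipate is in part (ii): producing a concrete subshift realization whose complexity actually meets the bound $2\rho_{X}(\lceil n/2\rceil)$. The naive encoding of $X\times\{0,1\}$ over $\A\times\{0,1\}$ by alternating the second coordinate costs $\sim 2\rho_{X}(n)$, which is far too large; inserting a single marker letter $\star$ between consecutive $\A$-letters is what halves the effective $X$-content per window. Matching the identifications in this concrete subshift model with the abstract definitions of $\tau$ and $\bar h$, so that the commutator identity continues to hold inside the resulting $[[\psi]]'$, is the bookkeeping that will absorb most of the care.
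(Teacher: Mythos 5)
Your proof is correct and follows essentially the same route as the paper: the same doubled system $X\times\{0,1\}$ (the paper uses $\{1,2\}$), the same marker-letter subshift realization for the complexity bound, and the same identity expressing the diagonal image of an involution as the commutator of its one-sheet restriction with the sheet-swap $\tau$. The only cosmetic difference is that you spell out the conjugation relation $\tau\bar h_0\tau^{-1}=\bar h_1$, which the paper leaves implicit.
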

\begin{proof}
(i). Set $Y= X\times \{1,2\}$ and define $\psi:Y\to Y$ by
\[\left\{\begin{array}{l}\psi(x, 1)=(x, 2)\\
                                   \psi(x, 2)= (\varphi(x), 1).\end{array}\right.\]
The system $(Y, \psi)$ is clearly minimal as soon as $(X, \varphi)$ is. Observe that $X\times \{1\}$ and ${X}\times\{2\}$ are $\psi^2$-invariant and that the restrictions of $\psi^2$ to $X\times\{1\}$ and to ${X}\times\{2\}$ are conjugate to $(X, \varphi)$.   This naturally defines two commuting subgroups of $[[\psi]]$ isomorphic to $[[\varphi]]$ that have support $X\times\{1\}$ and $X\times \{2\}$ respectively, and are denoted $[[\varphi]]_1$ and $[[\varphi]]_2$.  Consider the diagonal injection $\delta :[[\varphi]]\to[[\varphi]]_1\times[[\varphi]]_2\subset[[\psi]]$.   If $g\in[[\varphi]]$ has order 2, its image $\delta(g)$ equals the commutator $[g_1:\tau]=g_1\tau g_1\tau$, where $g_1\in[[\varphi]]_1$ is the image of $g$ under the natural isomorphism $[[\varphi]]\to[[\varphi]]_1$, and $\tau\in[[\psi]]$ is the element that coincides with $\psi$ on $X\times\{1\}$  and with $\psi^{-1}$ on ${X}\times\{2\}$. It follows that $\delta(H)$ is contained in $[[\psi]]'$, proving part (i).

(ii). If $(X, \varphi)$ is a subshift over the finite alphabet $\A$, the above construction is readily seen to be equivalent to the following one. Consider the alphabet $\mathcal{B}=\A\sqcup \{z\}$ where $z$ is a letter not in $\A$, and define $Y\subset \mathcal{B}^\Z$ to be the subshift consisting of all sequences of the form $y=\cdots zx_{-2}z x_{-1}.zx_0zx_1\cdots$ or $y=\cdots zx_{-2}z x_{-1}z.x_0zx_1\cdots$ where $x=\cdots x_{-2}x_{-1}.x_0x_1\cdots$ is a sequence in $X$. There is a natural partition of   $Y$ in two sets homeomorphic to $X$ which are given by the parity of the position of appearances of $z$. These two sets play the roles of  $X\times\{1\}$ and ${X}\times\{2\}$ in the proof of part (i) and the same argument yields an embedding of $H$ in $[[\psi]]'$. The claim on the complexity is also clear from this description.
 \end{proof}
 Since the groups $G_\omega$ are generated by elements of order 2, we get the following improvement of Theorem \ref{main}.
 \begin{cor}\label{C: commutator}
 Every Grigorchuk group $G_\omega$ embeds in the commutator subgroup of the topological full group of a minimal subshift.
 \end{cor}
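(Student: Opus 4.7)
The plan is to combine Theorem~\ref{main} with Lemma~\ref{L: commutator}. First, I would invoke Theorem~\ref{main} (whose proof is the content of the previous proposition) to obtain the minimal subshift $(X_\omega, \varphi_\omega)$ together with the injective homomorphism $\iota: G_\omega \to [[\varphi_\omega]]$ sending the standard generators $a, b_\omega, c_\omega, d_\omega$ to $\bar{a}, \bar{b}_\omega, \bar{c}_\omega, \bar{d}_\omega$.

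The key observation is that the generators of $G_\omega$ satisfy $a^2 = b_\omega^2 = c_\omega^2 = d_\omega^2 = e$, as already noted in Section~\ref{S: Grigorchuk groups}. Since $\iota$ is a group homomorphism, the elements $\bar{a}, \bar{b}_\omega, \bar{c}_\omega, \bar{d}_\omega \in [[\varphi_\omega]]$ also satisfy this relation, and since $\iota$ is injective with the generators being non-trivial in $G_\omega$, each of these images has order exactly $2$. Consequently, if $H < [[\varphi_\omega]]$ denotes the subgroup generated by elements of order~$2$, then $\iota(G_\omega) \subseteq H$.

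Finally, I would apply Lemma~\ref{L: commutator} with $(X, \varphi) = (X_\omega, \varphi_\omega)$; since $(X_\omega, \varphi_\omega)$ is a subshift, part~(ii) of the lemma furnishes a minimal subshift $(Y, \psi)$ and an embedding $\delta: H \hookrightarrow [[\psi]]'$. The composition $\delta \circ \iota: G_\omega \hookrightarrow [[\psi]]'$ then provides the desired embedding of $G_\omega$ into the commutator subgroup of the topological full group of a minimal subshift.

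There is essentially no obstacle here: the corollary is a direct splicing of two results already established, and the only verification is that the image of $\iota$ lies in the subgroup generated by involutions, which follows immediately from the torsion relations among the generators of $G_\omega$.
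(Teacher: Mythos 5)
Your proposal is correct and matches the paper's argument, which is exactly the one-line observation preceding the corollary: since $G_\omega$ is generated by the involutions $a, b_\omega, c_\omega, d_\omega$, its image under $\iota$ lies in the subgroup $H$ generated by elements of order $2$, and Lemma~\ref{L: commutator}(ii) then embeds $H$ into $[[\psi]]'$ for a minimal subshift $(Y,\psi)$. (The claim that the images have order \emph{exactly} $2$ is unnecessary --- order dividing $2$ already suffices --- but this is harmless.)
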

 This already gives an embedding of $G_\omega$ into a finitely generated, simple amenable group by the result of Juschenko and Monod \cite{Juschenko-Monod:simpleamenable}. To prove that  the Liouville property holds, we estimate the complexity of  the subshift $(X_\omega, \varphi_\omega)$ constructed in Section 3, then apply part (ii) of Lemma \ref{L: commutator} and  Theorem \ref{T: complexity liouville}. The following lemma is enough to conclude the proof of Theorem \ref{P: embedding}.
 
 \begin{lemma} 
 For every $\omega\in \Omega$ the complexity of the subshift $(X_\omega, \varphi_\omega)$ constructed in Section 3 satisfies $\rho_{X_\omega}(n)\leq 6n$ for every $n \in \N$.
 \end{lemma}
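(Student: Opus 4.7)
The plan is to reduce the complexity of $X_\omega$ to that of the $\Lambda$-block subsequence of $\tilde{\Gamma}_\omega$, and then exploit the self-similar structure from Lemma \ref{P: labelling} to bound this complexity linearly, uniformly in $\omega$.

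By construction of $X_\omega$ together with Remark \ref{R: every word appears}, the length-$n$ subwords over $\A$ appearing in $X_\omega$ coincide with the length-$n$ subwords of the block encoding $w_\omega := \Theta\,\Delta_{1,\omega}\,\Theta\,\Delta_{2,\omega}\,\Theta\cdots$ of $\tilde{\Gamma}_\omega$. In $w_\omega$, odd positions are all $\Theta$ and even positions carry the letters of the $\{\Lambda_{\mathbf{0}},\Lambda_{\mathbf{1}},\Lambda_{\mathbf{2}}\}$-valued sequence $\eta_\omega := \Delta_{1,\omega}\Delta_{2,\omega}\Delta_{3,\omega}\cdots$. Any length-$n$ subword of $w_\omega$ is therefore determined by the parity of its starting position (two choices) and the induced subword of $\eta_\omega$, whose length is $\lfloor n/2\rfloor$ for one parity and $\lceil n/2\rceil$ for the other. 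Writing $p_{\eta_\omega}$ for the complexity of $\eta_\omega$,
\[
\rho_{X_\omega}(n)\leq p_{\eta_\omega}(\lfloor n/2\rfloor)+p_{\eta_\omega}(\lceil n/2\rceil).
\]

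The bound on $p_{\eta_\omega}$ is obtained by an analogous splitting. Lemma \ref{P: labelling}(i) gives the interleaving $\eta_\omega(2j-1)=\Lambda_{\omega(1)}$ and $\eta_\omega(2j)=\eta_{\sigma\omega}(j)$ for all $j\geq 1$. A length-$k$ window of $\eta_\omega$ starting at an odd position has its odd-indexed entries forced to be $\Lambda_{\omega(1)}$ and its even-indexed entries forming a window of $\eta_{\sigma\omega}$ of length $\lfloor k/2\rfloor$; starting at an even position it similarly reduces to a window of $\eta_{\sigma\omega}$ of length $\lceil k/2\rceil$. This yields the recursions
\[
p_{\eta_\omega}(2m)\leq 2\,p_{\eta_{\sigma\omega}}(m),\qquad p_{\eta_\omega}(2m+1)\leq p_{\eta_{\sigma\omega}}(m)+p_{\eta_{\sigma\omega}}(m+1).
\]
A strong induction on $k$, uniform in $\omega\in\Omega$, with the trivial base case $p_{\eta_\omega}(1)\leq 3$ (alphabet size), then gives $p_{\eta_\omega}(k)\leq 3k$ for every $k\geq 1$ and every $\omega$.

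Combining the two steps, $\rho_{X_\omega}(n)\leq 3\lfloor n/2\rfloor + 3\lceil n/2\rceil=3n\leq 6n$ for $n\geq 2$, and $\rho_{X_\omega}(1)\leq|\A|=4\leq 6$. The only mildly delicate point is the parity bookkeeping combined with keeping the induction uniform in $\omega$ (each step shifts $\omega$ to $\sigma\omega$, so the inductive hypothesis must be quantified over all sequences simultaneously); otherwise the argument is a direct unfolding of the two-scale self-similarity of $\tilde\Gamma_\omega$.
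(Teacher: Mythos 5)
Your argument is correct, and it reaches a slightly sharper conclusion ($\rho_{X_\omega}(n)\le 3n$ for $n\ge 2$) than the stated bound, but it takes a genuinely different route from the paper. The paper picks the single scale $m$ with $2^{m-1}<n\le 2^m$, uses Lemma \ref{P: labelling}(ii) to locate every length-$n$ word inside some $\tilde{\Gamma}_{m-1,\omega}*\Delta*\tilde{\Gamma}_{m-1,\omega}$, and counts such words directly by the pair (choice of $\Delta$, offset of the first letter), giving $3\cdot 2^m\le 6n$ in three lines. You instead peel off one level of the substitution at a time: first separating the $\Theta$-positions from the $\Lambda$-sequence $\eta_\omega$, then using the interleaving $\Delta_{2j-1,\omega}=\Lambda_{\omega(1)}$, $\Delta_{2j,\omega}=\Delta_{j,\sigma\omega}$ to get a recursion $p_{\eta_\omega}(k)\le p_{\eta_{\sigma\omega}}(\lfloor k/2\rfloor)+p_{\eta_{\sigma\omega}}(\lceil k/2\rceil)$, closed by an induction quantified over all $\omega$ at once. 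Your parity bookkeeping and the induction both check out (the base case $p_{\eta_\omega}(1)\le 3$ and the two displayed recursions do propagate $p_{\eta_\omega}(k)\le 3k$), and you correctly flag the one delicate point, namely that the inductive hypothesis must be uniform in $\omega$ since each step shifts the defining sequence. One small citation quibble: the interleaving identity you attribute to Lemma \ref{P: labelling}(i) is really the $n=1$ case of part (ii) (this is exactly how the paper extracts it in the proof of Lemma \ref{L: almost periodic}); part (i) alone describes the finite prefixes $\tilde{\Gamma}_{n,\omega}$ but does not by itself identify the even-indexed subsequence of $(\Delta_{j,\omega})$ with $(\Delta_{j,\sigma\omega})$. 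What your approach buys is a better constant and a cleaner view of $X_\omega$ as a substitutive system; what the paper's buys is brevity and no need for a uniform-in-$\omega$ induction.
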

 \begin{remark}
  Here we see $(X_\omega, \varphi_\omega)$ as a subshift in the usual sense, over the (formal) alphabet $\A=\{\Theta, \Lambda_\mathbf{0}, \Lambda_\mathbf{1}, \Lambda_\mathbf{2}\}$.  
 \end{remark}
 \begin{proof}
 Let $x\in X_\omega$ and consider a finite subword $w$ of $x$ of length $n\in\N$. Let $m\in \N$ be the smallest integer such that $n\leq 2^m$. By part (ii) of Lemma \ref{P: labelling} Êthe word $w$ appears as a subword of a word of the form $\tilde{\Gamma}_{m-1, \omega}*\Delta*\tilde{\Gamma}_{m-1, \omega}$, where $\Delta\in\{\Lambda_\mathbf{0}, \Lambda_\mathbf{1}, \Lambda_\mathbf{2}\}$. Such a word is uniquely determined by $\Delta$ (3 possibilities) and by the position of its first letter ($2^m$ possibilities, since we may assume without loss of generality that the first letter of $w$ is in  $\tilde{\Gamma}_{m-1, \omega}*\Delta$). Thus $\rho_{X_\omega}(n)\leq 3\cdot 2^m\leq 6n$, where we have used that $2^{m-1}< n$ by the choice of $m$.
 \end{proof}

 \appendix
In this Appendix we give proofs of the statements in Section \ref{S: Schreier}, namely Propositions \ref{P: orbit}, Proposition \ref{P: Schreier} and Lemma \ref{P: labelling}.

We first state a remark that follows from the definitions of $a, b_\omega,c_\omega, d_\omega$.
\begin{remarkappendix}\label{R: moves}
Given any binary string $\gamma\in \{0, 1\}^\infty$, a generator 
$s\in \{a, b_\omega, c_\omega, d_\omega\}$ that acts non-trivially on $\gamma$ can act in two possible ways:
\begin{itemize}
\item[(M1)] flip the first digit of $\gamma$ (if and only if $s=a$);
\item[(M2)] flip the digit after the first appearance of $0$ in $\gamma$ (only if $s\in \{b_\omega, c_\omega, d_\omega\}$).
\end{itemize}
Moreover for every $\gamma\neq\rho$ it is possible to choose $s\in \{ b_\omega, c_\omega, d_\omega\}$ that acts on $\gamma$ by a move of type (M2). 
\end{remarkappendix}

\begin{proof}[Proof of Proposition \ref{P: orbit}]
Since  the action of every generator of $G_\omega$ changes at most one letter of each binary string,  every string in the orbit of $\rho$ is cofinal with $\rho$. Conversely, it is easy to see that starting from $\rho$ and performing only moves of type (M1) and (M2) it is possible to reach every string which is cofinal with $\rho$. \qedhere
\end{proof}
\begin{proof}[Proof of Proposition \ref{P: Schreier}] 
Since $\rho$ is fixed by $b_\omega, c_\omega, d_\omega$, it has three loops and it is the endpoint of a simple edge (given by the action of $a$).
If $\gamma\neq \rho$ there exists exactly one generator  $s\in \{b_\omega, c_\omega, d_\omega\}$,  such that $s\gamma=\gamma$. The generator $s$  is determined by the value $\omega(m)$, where $m$ is the position of the first 0 appearing in $\gamma$. The two other generators $s', s''\in \{b_\omega, c_\omega, d_\omega\}$ act non-trivially on $\gamma$ by a move of type (M2), in particular $s'\gamma=s''\gamma$. The generator $a$ acts non-trivially on $\gamma$ by a move of type (M1), and thus 
$a\gamma\neq s'\gamma$. Hence the vertex corresponding to $\gamma$ in $\Gamma_\omega$  is the endpoint of a loop, a double edge and a simple edge. Finally, observe that the only connected unlabelled graph respecting these local rules is the graph shown in Figure \ref{F: blocks}.\qedhere
\end{proof}

Before giving the proof of Lemma \ref{P: labelling} we recall some terminology. The \emph{Gray code} is a classical non-standard way to code natural numbers with  binary strings, having the property that two consecutive strings differ only by one bit. It is a well-known fact that the Gray code can be used to describe the orbital Schreier graphs of several groups acting on rooted trees, including all groups $G_\omega$ (cf. Bartholdi, Grigorchuk and \v{S}uni\'c \cite[Section 10.3]{Bartholdi-Grigorchuk-Sunic:branchgroups} where this is mentioned explicitly for the first Grigorchuk group).  We recall the construction of the Gray code in the definition below; the reader should be warned that the roles of 0 and 1 are exchanged here with respect to the more usual definition.
 \begin{definappendix}  
 
 The \emph{Gray code order} is the numeration of the set $\{0, 1\}^l$ of binary sequences of length $l$, denoted $(r^{(l)}_j)_{0\leq j\leq 2^{l}-1}$, defined recursively as follows. Set $r^{(1)}_0=1$ and $r^{(1)}_1=0$. For every $l\geq 0$ and $0\leq j\leq 2^l-1$ set 
\[
r_j^{(l+1)}=r_j^{(l)}1;\qquad
r_{2^l+j}^{(l+1)}=r_{2^l-1-j}^{(l)}0.\]
 In plain words, the first $2^l$ terms of the sequence $(r^{(l+1)}_j)_{0\leq j\leq 2^{l+1}-1}$ are obtained by attaching the digit 1 to the sequence $(r^{(l)}_j)_{0\leq j\leq 2^{l}-1}$, and the last $2^l$ terms are obtained by  reversing the order of the sequence $(r^{(l)}_j)_{0\leq j\leq 2^{l}-1}$ and attaching the digit $0$ at the end. 
\end{definappendix}
For example, the Gray code order of binary strings of length 4 is $1111;\: 0111;$ $\: 0011;\:$ $ 1011;\: 1001;\:$ $ 0001;\: 0101;$ $\: 1101;\: 1100;\: 0100;\: 0000;\: 1000;\: 1010;\: 0010;\: 0110;\: 1110.$

 \begin{definappendix} The \emph{Gray code enumeration} of the orbit $G_\omega\rho$ is the sequence $(\rho_j)_{j\in\N}$, taking values in $\{0, 1\}^\infty$, where $\rho_j$ is obtained by attaching infinitely many digits 1 to  $r^{(l)}_j$, for any $l>0$  such that $r^{(l)}_j$ is defined (i.e. such that $j\leq 2^l-1$). Note that, for any such $l$, the string $r^{(l+1)}_j$ is obtained from $r^{(l)}_j$ by attaching the digit 1 on the right, in particular $\rho_j$ does not depend on the choice of $l$.
 \end{definappendix}

\begin{lemmaappendix}\label{L: sequence vertices}
The sequence $(\rho_j)$ coincides with the sequence of vertices of $\Gamma_\omega$, ordered by increasing distance from $\rho$. 
\end{lemmaappendix}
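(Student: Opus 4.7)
The plan is to show that $(\rho_j)_{j\in\mathbb{N}}$ is a bijective enumeration of the orbit $G_\omega\rho$ with the property that $\rho_j$ and $\rho_{j+1}$ are adjacent in $\Gamma_\omega$ for every $j$. Once this is established, Proposition \ref{P: Schreier}, which identifies $\Gamma_\omega$ as an infinite half-line (loops and double edges do not alter graph distances), forces $\rho_j$ to be the unique vertex at distance $j$ from $\rho=\rho_0$, which is exactly the conclusion.

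Bijectivity is the easy part. Injectivity follows because, for each fixed $l$, the map $j\mapsto r^{(l)}_j$ is a bijection $\{0,\ldots,2^l-1\}\to\{0,1\}^l$, so distinct indices yield distinct infinitized sequences. Surjectivity onto $G_\omega\rho$ follows from Proposition \ref{P: orbit}: every $\gamma\in G_\omega\rho$ has the form $v\cdot 1^\infty$ with $v\in\{0,1\}^l$ for some $l$, and writing $v=r^{(l)}_j$ gives $\gamma=\rho_j$.

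The substantive step is adjacency of consecutive terms. I would first prove, by induction on $l$, the combinatorial statement: for every $0\le j<2^l-1$, the strings $r^{(l)}_j$ and $r^{(l)}_{j+1}$ differ in exactly one position $p$, and either $p=1$ or the leftmost zero of $r^{(l)}_j$ sits at position $p-1$. The induction step uses the recursive definition: for pairs lying entirely in the first half ($j+1<2^l$) the conclusion is inherited by appending a $1$; for pairs lying entirely in the second half ($j\ge 2^l$) it is inherited from the reversed length-$l$ sequence after appending a $0$, together with the observation that the "one after the leftmost zero" condition is symmetric in the two consecutive strings (since they agree on all positions strictly before $p$). The delicate case is the transition $r^{(l+1)}_{2^l-1}\to r^{(l+1)}_{2^l}$, which only flips the last bit, at position $l+1$; a short auxiliary induction yields $r^{(l)}_{2^l-1}=1^{l-1}0$, so the leftmost zero of $r^{(l+1)}_{2^l-1}=1^{l-1}01$ is at position $l$, and the flipped position $l+1$ is indeed one past it. The combinatorial statement transfers verbatim to the infinite sequence $(\rho_j)$ because appending $1$s past position $l$ preserves both the flipped position and the leftmost zero; finally, by Remark \ref{R: moves}, a bit flip of either allowed type realizes the action of some generator in $\{a,b_\omega,c_\omega,d_\omega\}$ on $\rho_j$, producing the required edge in $\Gamma_\omega$.

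I expect the main obstacle to be the bookkeeping in the inductive step, specifically the symmetry check for the reversed second half and the transition between the two halves where the flipped bit is the newly appended one. Once these are handled cleanly, the identification of $(\rho_j)$ with the distance enumeration of $\Gamma_\omega$ is essentially a formal consequence of the half-line structure.
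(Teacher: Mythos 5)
Your proposal is correct and follows essentially the same route as the paper: both rest on an induction on $l$ showing that consecutive Gray-code strings differ by a single bit flip of type (M1) or (M2), and then conclude via Remark \ref{R: moves} and the half-line structure of Proposition \ref{P: Schreier}. The only cosmetic difference is that the paper records the parity alternation (type (M1) for even $j$, type (M2) for odd $j$) to rule out backtracking along the half-line, whereas you use injectivity of the enumeration for the same purpose.
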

\begin{proof}
It follows from the definitions that $\rho_0=\rho$, and that $\rho_{j+1}$ is obtained from $\rho_j$ by performing a move of type (M1) if $j$ is even and a move of type (M2) if $j$ is odd (this can be immediately proven by induction on $l$ from the definition of the Gray code order). Thus the sequence $(\rho_j)$ coincides with the sequence of vertices of $\Gamma_\omega$ by Remark \ref{R: moves}.
\end{proof}
\begin{proof}[Proof of Lemma \ref{P: labelling}]
For $j\geq 1$ let $m_j$ be the position of the first 0 digit in $\rho_j$. There is exactly one generator $s\in\{b_\omega, c_\omega, d_\omega\}$ such that $s\rho_j=\rho_j$ and this generator is determined by $\omega(m_j)$ (more precisely $s=b_\omega, c_\omega, d_\omega$ if $\omega(m_j)=\mathbf{2, 1, 0}$, respectively). By comparing this observation with Definition \ref{D: labelling} and with Figure \ref{F: blocks}, we see that $\Delta_{i, \omega}=\Lambda_{\omega(m_{2i-1})}$. Set  $a_i=m_{2i-1}$. The sequence $(a_i)$ only depends on the definition of the Gray code order, and it is characterized by the recursion
\[\left\{\begin{array}{lr} a_{2^{n}}=n+1 &\text{for every }n\geq 0,\\
a_{2^{n}+j}=a_{2^{n}-j}& \text{for every }1\leq j\leq 2^{n}-1.
\end{array}\right.
\]

To prove part (i) of Lemma \ref{P: labelling}, proceed by induction on $n$. First note that $a_1=1$, so that $\Delta_{1, \omega}=\Lambda_{\omega(1)}$ and $\tilde{\Gamma}_{1,\omega}=\Theta*\Lambda_{\omega(1)}*\Theta$.  Suppose that the first $2^{n+1}$ vertices of $\tilde{\Gamma}_\omega$ span the graph $\tilde{\Gamma}_{n, \omega}$. Then, using the recursion for $a_j$, we have
\[ \tilde{\Gamma}_{n+1, \omega}=\Theta*\Lambda_{\omega(a_1)}*\Theta*\cdots *\Lambda_{\omega(a_{2^{n}})}*\cdots*\Theta*\Lambda_{\omega(a_{2^{n+1}-1})}*\Theta=\tilde{\Gamma}_{n, \omega}*\Lambda_{\omega(n+1)}*\tilde{\Gamma}_{n, \omega}^R,\]
where $\tilde{\Gamma}_{n, \omega}^R$ denotes $\tilde{\Gamma}_{n, \omega}$ reversed from right to left. However we may assume by induction  that $\tilde{\Gamma}_{n, \omega}$ is symmetric with respect to this operation, thus $\tilde{\Gamma}_{n+1, \omega}=\tilde{\Gamma}_{n, \omega}*\Lambda_{\omega(n+1)}*\tilde{\Gamma}_{n, \omega}$ concluding the proof of part (i).

To prove part (ii), fix $n$ and use part (i) to prove by induction on $m\geq 1$ that $\tilde{\Gamma}_{n+m, \omega}$ has the form
\[\tilde{\Gamma}_{n+m, \omega}=\tilde{\Gamma}_{n, \omega}*\Delta_{1, \sigma^n\omega}*\tilde{\Gamma}_{n, \omega}*\Delta_{2, \sigma^n\omega}\cdots\tilde{\Gamma}_{n, \omega}*\Delta_{2^{m+1}-1, \sigma^n\omega}*\tilde{\Gamma}_{n, \omega},\]
then let $m\to\infty$ to  conclude the proof.\qedhere

\end{proof}

  \bibliographystyle{alpha}
  \bibliography{these5-8-14}

\end{document}